\title[Tensors and Graphs]%
       {$GL_n$-Invariant tensors and graphs}
\author[M.~Markl]{Martin~MARKL}
\thanks{The author was supported by the grant GA \v CR 201/08/0397 and by
   the Academy of Sciences of the Czech Republic,
   Institutional Research Plan No.~AV0Z10190503}
\address{Mathematical Institute of the Academy, {\v Z}itn{\'a} 25,
         115 67 Prague 1, The Czech Republic}
\email{markl@math.cas.cz}
\keywords{Invariant tensor, general linear group, graph} 
\subjclass{20G05}
\begin{document}
\baselineskip16pt plus 1pt minus .5pt

\bibliographystyle{plain}

\swapnumbers
\newtheorem{theorem}{Theorem}[section]
\newtheorem{corollary}[theorem]{Corollary}
\newtheorem{observation}[theorem]{Observation}
\newtheorem{lemma}[theorem]{Lemma}
\newtheorem{proposition}[theorem]{Proposition}
\newtheorem{problem}[theorem]{Problem}
\newtheorem{conjecture}[theorem]{Conjecture}
\newtheorem{odstavec}{\hskip -0.1mm}[section]
\newtheorem*{principle}{Principle}
\newtheorem*{itt}{Invariant Tensor Theorem}

\theoremstyle{definition}
\newtheorem{example}[theorem]{Example}
\newtheorem{remark}[theorem]{Remark}
\newtheorem{definition}[theorem]{Definition}

\def\Rada#1#2#3{#1_{#2},\dots,#1_{#3}} \def\GLV{{\GLname(V)}}
\def\aRda#1#2#3{#1^{#2},\dots,#1^{#3}} \def\bfk{{\mathbf k}}
\def\pa{\partial} \def\Con{{\it Con\/}} \def\card{{\rm card}}
\def\GLname{{\rm GL\/}}\def\GL#1#2{{\GLname\/}^{(#1)}_{#2}}
\def\NGLname{{\rm NGL\/}}\def\NGL#1#2{{\NGLname\/}^{(#1)}_{#2}}
\def\nglname{{\mathfrak {ngl}\/}}\def\ngl#1#2{{\nglname\/}^{(#1)}_{#2}}
\def\JGLname{{\rm J\GLname\/}}  \def\pg{{\mathbold p}}
\def\Frname{{\it Fr\/}}\def\IN{{\rm In}}\def\OUT{{\rm Ou}}
\def\fS{{\mathfrak S}} \def\fs{{\mathfrak s}}\def\hh{h}
\def\Ant{{\rm Ant\/}}\def\epi{ \twoheadrightarrow}\def\frH{{\mathfrak H}}
\def\fSout{\fS_{\it out}} \def\fSin{\fS_{\it in}}\def\Grbull{\Gr_\bullet}
\def\Fr#1{{\Frname}^{(#1)}} \def\wR{{\widehat R}} \def\wGr{{\widehat{\Gr}}}
\def\fin{{\mathfrak{{I}}}} \def\udelta{{\underline{\delta}}}
\def\Si{{\mathfrak{{I}}}} \def\So{{\mathfrak{{O}}}}
\def\wrR{\widehat{\mathrm R}}\def\uR{{\underline{R}}}
\def\adjust#1{{\raisebox{-#1em}{\rule{0pt}{0pt}}}} \def\adj{\adjust {.4}}
\def\fout{{\mathfrak{ou}}} \def\rR{{\mathrm R}}
\def\bbbR{{\mathbb R}} \def\bbR{\bbbR}
\def\dr#1#2{\frac{\partial #1}{\partial #2}}
\def\Man{{\tt Man\/}} \def\ctverecek{\raisebox{-0pt}{\smbbox}\hskip -4pt}
\def\gF{{\mathfrak F\/}}\def\gG{{\mathfrak G\/}}\def\gO{{\mathfrak O\/}}
\def\gB{{\mathfrak B\/}}\def\sB{{\EuScript B}}
\def\Fib{{\tt Fib\/}} \def\symexp#1#2{{#1^{\odot #2}}}
\def\Lin{{\mbox {\it Lin\/}}}\def\Sym{{\mbox {\it Sym\/}}}
\def\ext{\mbox{\Large$\land$}}\def\bp{{\mathbf p}}
\def\EXT{\mbox{\raisebox{-.6em}{\rule{1pt}{0pt}}%
         \raisebox{-.2em}{\Huge$\land$}}}
\def\LAND{\mbox{\raisebox{-.2em}{\rule{1pt}{0pt}}%
         \raisebox{-.0em}{\Large$\land$}}}
\def\Jo{{\overline{\JGLname}}}\def\uNat{\underline{\mathfrak {Nat}\/}} 
\def\jo{{\overline {\j}}} \def\CCE{C_{\it CE}}
\def\semidirect{\makebox{\hskip .3mm$\times \hskip -.8mm\raisebox{.2mm}%
                {\rule{.17mm}{1.9mm}}\hskip-.75mm$\hskip 2mm}}
\def\Rn{{\bbR^n}} \def\ot{\otimes} \def\zn#1{{(-1)^{#1}}}
\def\id{{1 \!\! 1}} \def\gh{{\mathfrak h}}
\def\sF{{\EuScript F}}\def\sG{{\EuScript G}}\def\sC{{\EuScript C}}
\def\Map{{\mbox {\it Map\/}}} \def\Nat{{\mathfrak {Nat}\/}} 
\def\WFG{W_{\gF,\gG}}  \def\sqot{\hskip -.3em \otimes \hskip -.3em}
\def\Gr{{\EuScript {G}\rm r}}\def\plGr{{\rm pl\widehat{\EuScript {G}\rm r}}}
\def\GrFG{{\Gr}_{\gF,\gG}}   \def\orGr{{\rm or\EuScript {G}\rm r}}
\def\otexp#1#2{#1^{\ot #2}} \def\semGH{G \semidirect H}
\def\sbbox{{\raisebox {.1em}{\rule{.4em}{.4em}} \hskip .1em}}
\def\bbox{{\raisebox {.1em}{\rule{.6em}{.6em}} \hskip .1em}}
\def\mbbox{{\raisebox {.1em}{\rule{.4em}{.4em}} \hskip .1em}}
\def\smbbox{{\raisebox {.0em}{\rule{.5em}{.5em}} \hskip .0em}}
\def\vmbbox{{\raisebox {.1em}{\rule{.2em}{.2em}} \hskip .1em}}
\def\ii{{(\infty)}} \def\phi{\varphi}\def\uGrFG{{\uGr}_{\gF,\gG}} 
\def\rada#1#2{{#1,\ldots,#2}}\def\uGr{{\EuScript {G}\underline{\rm r}}}
\def\dirlim{{{\mathop{{\rm lim}}\limits_{\longrightarrow}}\hskip 1mm}}
\def\dCE{\delta_{\it CE\/}} \def\plR{{\mathrm {pl}\widehat{\EuScript R}}}  
\def\orR{{\mathit orR}} \def\fA{{\widehat{\Sigma}}}
\def\Vert{{\it Vert\/}} \def\bfb{{\mathbf b}}
\def\Ker{{\it Ker}} \def\wc{\circlearrowright}
\def\Lie{{\mathcal L{\it ie\/}}}\def\Im{{\it Im}}
\def\Ext{\mathop{{\rm \EXT}}\displaylimits}
\def\Land{\mathop{{\LAND}}\displaylimits}
\def\skelet{
\begin{picture}(2,4)(0,0)
\put(1,1){\oval(2,2)[t]}
\put(1,-1){\oval(2,2)[b]}
\put(2,0.25){\makebox(0,0)[c]{\vdots}}
\end{picture}
}

\def\anchor{$\unitlength .25cm
\begin{picture}(1,1.4)(-1,-.7)
\put(-.45,.55){\makebox(0,0)[cc]{$\sbbox$}}
\put(-.5,-.8){\vector(0,1){1.2}}
\end{picture}$}

\def\uunit{$\unitlength .25cm
\begin{picture}(1,1.4)(-1,-.9)
\put(-.45,.55){\makebox(0,0)[cc]{$\sbbox$}}
\put(-.45,-1){\makebox(0,0)[cc]{$\bullet$}}
\put(-.5,-.8){\vector(0,1){1.2}}
\end{picture}$}

\def\black{$\unitlength .25cm
\begin{picture}(1,1.4)(-1,-.7)
\put(-.45,-.3){\makebox(0,0)[cc]{\Large$\bullet$}}
\put(-.5,0){\vector(0,1){1.3}}
\end{picture}\hskip .2em $}

\def\white{$\unitlength .25cm
\begin{picture}(1,1.4)(-1,-.7)
\put(-.45,-.3){\makebox(0,0)[cc]{\Large$\circ$}}
\put(-.5,0.1){\vector(0,1){1.2}}
\end{picture}\hskip .2em $}

\def\osG{\overline{\Gr}}    \def\ti{{\times \infty}}      \def\td{{\times d}}
\def\Tr{{\it Tr\/}}         \def\oti{{\otimes \infty}}    
\def\Com{{\EuScript C}{\it om}} \def\calA{{\EuScript A}} 
\def\bfc{{\mathbf c}}       \def\calQ{{\EuScript Q}}
\def\calP{{\EuScript P}}     \def\pLie{p{\mathcal L{\it ie\/}}}
\def\Re{{\mathbb R}}        \def\od{{\otimes d}}
\def\crr{connected replacement rules}
\def\odrazka#1{{\raisebox{#1}{\rule{0pt}{0pt}}}}
\def\ccdot{\mbox {\scriptsize \hskip .3em $\bullet$\hskip .3em}}
\def\Grtr{\Gr_{\bullet\nabla\it Tr}}      \def\vt{\vartheta}
\def\Grd #1#2#3{\Gr^{#1}_{\bullet#2}[#3](d)} \def\Edg{{\it Edg}}
\def\Lab{{\it Lab}}

\def\borelioza#1#2{
\unitlength.7cm
\put(0,-1){
\put(0,-.1){
\put(0,2){\put(0.03,0){\makebox(0,0)[cc]{$\bbox$}}}
\put(0,1){\vector(0,1){.935}}
\put(0,1){\makebox(0,0)[cc]{\Large$\bullet$}}
\put(0,.7){\makebox(0,0)[t]{\scriptsize$#1$}}}
\put(.4,.2){
\put(2.09,.85){\makebox(0,0)[cc]{\oval(1.5,1.5)[b]}}
\put(2.09,1.15){\makebox(0,0)[cc]{\oval(1.5,1.5)[t]}}
\put(2.85,1.22){\line(0,1){.3}}
\put(.7,.7){\vector(1,1){.55}}
\put(.7,.7){\makebox(0,0)[cc]{\Large$\bullet$}}
\put(1.35,1.35){\makebox(0,0)[cc]{\Large$\bullet$}}
\put(1.32,1.25){\makebox(0,0)[tc]{\vector(0,1){0}}}
\put(1,1.45){\makebox(0,0)[r]{\scriptsize $F$}}
\put(.7,0.4){\makebox(0,0)[t]{\scriptsize $#2$}}
}}}
\def\cases#1#2#3#4{
                  \left\{
                         \begin{array}{ll}
                           #1,\ &\mbox{#2}
                           \\
                           #3,\ &\mbox{#4}
                          \end{array}
                   \right.
}
\def\boreliozaInv#1#2{
\unitlength.7cm
\put(0,-1){
\put(0,-.1){
\put(0,2){\put(0.03,0){\makebox(0,0)[cc]{$\bbox$}}}
\put(0,1){\vector(0,1){.935}}
\put(0,1){\makebox(0,0)[cc]{\Large$\bullet$}}
\put(0,.7){\makebox(0,0)[t]{\scriptsize$#1$}}
}
\put(3.8,.2){
\put(-2.09,.85){\makebox(0,0)[cc]{\oval(1.5,1.5)[b]}}
\put(-2.09,1.15){\makebox(0,0)[cc]{\oval(1.5,1.5)[t]}}
\put(-2.85,1.22){\line(0,1){.3}}
\put(-.7,.7){\vector(-1,1){.55}}
\put(-.7,.7){\makebox(0,0)[cc]{\Large$\bullet$}}
\put(-1.35,1.35){\makebox(0,0)[cc]{\Large$\bullet$}}
\put(-1.32,1.25){\makebox(0,0)[tc]{\vector(0,1){0}}}
\put(-1,1.45){\makebox(0,0)[l]{\scriptsize $F$}}
\put(-.7,0.4){\makebox(0,0)[t]{\scriptsize $#2$}}
}}}

\def\sigmadva{{\unitlength.5cm\thicklines
\put(0,-.5){\vector(1,1){1}}
\put(1,-.5){\vector(-1,1){1}}
\put(2,-.5){\vector(0,1){1}}
\put(0,-.7){\makebox(0,0)[t]{\scriptsize 1}}
\put(1,-.7){\makebox(0,0)[t]{\scriptsize 2}}
\put(2,-.7){\makebox(0,0)[t]{\scriptsize 3}}
\put(0,.7){\makebox(0,0)[b]{\scriptsize 1}}
\put(1,.7){\makebox(0,0)[b]{\scriptsize 2}}
\put(2,.7){\makebox(0,0)[b]{\scriptsize 3}}
}}

\def\sigmatri{{\unitlength.5cm\thicklines
\put(0,-.5){\vector(0,1){1}}
\put(1,-.5){\vector(1,1){1}}
\put(2,-.5){\vector(-1,1){1}}
\put(0,-.7){\makebox(0,0)[t]{\scriptsize 1}}
\put(1,-.7){\makebox(0,0)[t]{\scriptsize 2}}
\put(2,-.7){\makebox(0,0)[t]{\scriptsize 3}}
\put(0,.7){\makebox(0,0)[b]{\scriptsize 1}}
\put(1,.7){\makebox(0,0)[b]{\scriptsize 2}}
\put(2,.7){\makebox(0,0)[b]{\scriptsize 3}}
}}

\def\sigmactyri{{\unitlength.5cm\thicklines
\put(0,-.5){\vector(2,1){2}}
\put(1,-.5){\vector(0,1){1}}
\put(2,-.5){\vector(-2,1){2}}
\put(0,-.7){\makebox(0,0)[t]{\scriptsize 1}}
\put(1,-.7){\makebox(0,0)[t]{\scriptsize 2}}
\put(2,-.7){\makebox(0,0)[t]{\scriptsize 3}}
\put(0,.7){\makebox(0,0)[b]{\scriptsize 1}}
\put(1,.7){\makebox(0,0)[b]{\scriptsize 2}}
\put(2,.7){\makebox(0,0)[b]{\scriptsize 3}}
}}

\def\sigmapet{{\unitlength.5cm\thicklines
\put(0,-.5){\vector(1,1){1}}
\put(1,-.5){\vector(1,1){1}}
\put(2,-.5){\vector(-2,1){2}}
\put(0,-.7){\makebox(0,0)[t]{\scriptsize 1}}
\put(1,-.7){\makebox(0,0)[t]{\scriptsize 2}}
\put(2,-.7){\makebox(0,0)[t]{\scriptsize 3}}
\put(0,.7){\makebox(0,0)[b]{\scriptsize 1}}
\put(1,.7){\makebox(0,0)[b]{\scriptsize 2}}
\put(2,.7){\makebox(0,0)[b]{\scriptsize 3}}
}}

\def\sigmasest{{\unitlength.5cm\thicklines
\put(0,-.5){\vector(2,1){2}}
\put(1,-.5){\vector(-1,1){1}}
\put(2,-.5){\vector(-1,1){1}}
\put(0,-.7){\makebox(0,0)[t]{\scriptsize 1}}
\put(1,-.7){\makebox(0,0)[t]{\scriptsize 2}}
\put(2,-.7){\makebox(0,0)[t]{\scriptsize 3}}
\put(0,.7){\makebox(0,0)[b]{\scriptsize 1}}
\put(1,.7){\makebox(0,0)[b]{\scriptsize 2}}
\put(2,.7){\makebox(0,0)[b]{\scriptsize 3}}
}}

\def\brace{
\put(0,0){\line(0,-1){2}}
\put(0,0){\vector(-1,0){.2}}
\put(0,-2){\vector(-1,0){.2}}
}

\begin{abstract}
We describe a correspondence between
$\GLname_n$-invariant tensors and graphs. We then show how this
correspondence accommodates various types of symmetries and orientations.
\end{abstract}

\maketitle

\baselineskip 17pt plus 1pt minus .5pt

\section*{Introduction}

Let $V$ be a finite dimensional vector space over a field $\bfk$ of
characteristic zero and $\GLV$ the group of invertible linear
endomorphisms of $V$.
The classical (Co)Invariant Tensor Theorem recalled in Section~\ref{s1} 
states that the space of
$\GLV$-invariant linear maps between tensor products of copies of $V$
is generated by specific `elementary invariant tensors' and that these
elementary tensors are linearly independent if the dimension of $V$ is
big enough.

We will observe that elementary invariant tensors are in one-to-one
correspondence with contraction schemes for indices which are, in turn,
described by graphs. We then show how this translation between
invariant tensors and linear combination of graphs 
accommodates various types of symmetries and orientations.

The above type of description of invariant tensors by graphs was
systematically used by M.~Kontsevich in his seminal
paper~\cite{kontsevich:93}. Graphs representing tensors 
appeared also in the work of several other authors, let us
mention at least J.~Conant, A.~Hamilton, A.~Lazarev, J.-L.~Loday, S.~Mahajan
M.~Mulase, M.~Penkava, K.~Vogtmann, A.~Schwarz and G.~Weingart.

We were, however, not able to find a~suitable reference containing all
details. The need for such a reference appeared in connection with our
paper~\cite{markl:na} that provided a vocabulary between natural
differential operators and graph complexes. Indeed, this note was
originally designed as an appendix to~\cite{markl:na}, but we believe
that it might be of independent interest. It supplies necessary
details to~\cite{markl:na} and its future applications, and also puts
the `abstract tensor calculus' attributed to R.~Penrose onto a solid
footing.

\noindent 
{\bf Acknowledgement.} We would like to express our thanks to
J.-L.~Loday and J.~Stasheff for useful comments and remarks
concerning the first draft of this note.

\vskip 1cm

\noindent 
{\bf Table of content:} \ref{s1}.  
                 Invariant Tensor Theorem: A recollection -- page~\pageref{s1}
                   \hfill\break\noindent 
\hphantom{{\bf Table of content:\hskip .5mm}}  \ref{s2}.
                     Graphs appear: An example  -- page~\pageref{s2}
\hfill\break\noindent 
\hphantom{{\bf Table of content:\hskip .5mm}}  \ref{s3}.
                     The general case -- page~\pageref{s3}
\hfill\break\noindent 
\hphantom{{\bf Table of content:\hskip .5mm}}  \ref{s4}. 
                    Symmetries occur -- page~\pageref{s4}
                \hfill\break\noindent 
\hphantom{{\bf Table of content:\hskip .5mm}}  \ref{s5}.
                    A particular case  -- page~\pageref{s5}

\section{Invariant Tensor Theorem: A recollection}
\label{s1}

Recall that, for finite-dimensional $\bfk$-vector spaces $U$ and $W$,
one has canonical isomorphisms
\begin{equation}
\label{conon}
\Lin(U,W)^* \cong \Lin(W,U),\
\Lin(U,V) \cong U^* \ot V
\mbox { and }
(U \ot W)^* \cong U^* \ot V^*, 
\end{equation}
where $\Lin(-,-)$ denotes the space of $\bfk$-linear maps, $(-)^*$ the
linear dual and $\ot$ the tensor product over $\bfk$. The first
isomorphism in~(\ref{conon}) is induced by the non-degenerate pairing
\[
\Lin(U,W) \ot \Lin(W,U) \to \bfk
\]
that takes $f \ot g \in \Lin(U,W) \ot \Lin(W,U)$ into the trace of the
composition $\Tr(f \circ g)$, the remaining two isomorphisms are
obvious. In this note, by a {\em canonical isomorphism\/} we will usually
mean a composition of isomorphisms of the above types.
Einstein's convention assuming summation
over repeated (multi)indices is used. We will
also assume that the ground field $\bfk$ is of characteristic zero.

In what follows, $V$ will be an $n$-dimensional $\bfk$-vector space
and $\GLV$  the group of linear
automorphisms of $V$. 
We start by considering 
the vector space $\Lin(\otexp Vk,\otexp Vl)$ of $\bfk$-linear 
maps $f : \otexp Vk \to \otexp Vl$, $k,l \geq 0$. 
Since both $\otexp Vk$ and $\otexp Vl$ are natural
$\GLV$-modules, it makes sense to study the subspace
$\Lin_\GLV(\otexp Vk,\otexp Vl) \subset \Lin(\otexp Vk,\otexp Vl)$ of
$\GLV$-equivariant maps.

As there are no $\GLV$-equivariant maps in $\Lin(\otexp Vk,\otexp
Vl)=0$ if $k \not= l$ (see, for
instance,~\cite[\S24.3]{kolar-michor-slovak}), the only interesting
case is $k=l$. For a permutation $\sigma \in \Sigma_k$,
define the {\em elementary invariant tensor \/} $t_\sigma \in 
\Lin(\otexp Vk,\otexp Vk)$ as the map given by
\begin{equation}
\label{jdu_k_doktorovi}
t_\sigma(v_1 \otimes \cdots \otimes v_k) := 
v_{\sigma^{-1}(1)}\otimes \cdots \otimes v_{\sigma^{-1}(k)},\
\mbox { for }
\Rada v1k \in V.
\end{equation}
It is simple to verify that $t_\sigma$ is $\GLV$-equivariant. The
following theorem is a celebrated result of H.~Weyl~\cite{weyl}.

\begin{itt}
\label{itt}
The space $\Lin_\GLV(\otexp Vk,\otexp Vk)$ is spanned by elementary
invariant tensors $t_\sigma$, $\sigma \in \Sigma_k$. If $\dim(V) \geq
k$, the tensors $\{t_\sigma\}_{\sigma \in \Sigma_k}$ are linearly
independent.
\end{itt}

This form of the Invariant Tensor Theorem is a straightforward translation
of~\cite[Theorem~2.1.4]{fuks} describing
invariant tensors in $\otexp{{V^*}}k \ot \otexp Vk$ and remarks following
this theorem, see also~\cite[Theorem~24.4]{kolar-michor-slovak}. 
The Invariant Tensor Theorem can be reformulated into saying that the map
\begin{equation}
\label{preziji_to?}
{\mathcal R}_n : \bfk[\Sigma_k] \to \Lin_\GLV(\otexp Vk,\otexp Vk)
\end{equation}
from the group ring of $\Sigma_k$ to the subspace of
$\GLV$-equivariant maps given by ${\mathcal R}_n(\sigma) := t_\sigma$, $\sigma
\in \Sigma_k$, is always an epimorphism and is an isomorphism for $n
\geq k$ (recall $n$ denoted the dimension of $V$).

The tensors $\{t_\sigma\}_{\sigma \in
\Sigma_k}$ are not linearly independent if $\dim(V) <k$. 
For a subset $S \subset\{\rada 1k\}$ such that
$\card(S) > \dim(V)$, denote by $\Sigma_S$ the subgroup of 
$\Sigma_k$ consisting of permutations that leave the complement $\{\rada
1k\}\setminus S$ fixed. It is simple to verify that then
\begin{equation}
\label{Pozitri_zpet_do_Prahy}
\sum_{\sigma \in \Sigma_S}{\rm sgn\/}(\sigma) \cdot  t_\sigma  = 0
\end{equation}
in $\Lin_\GLV(\otexp Vk,\otexp Vk)$. By ~\cite[II.1.3]{fuks}, 
all relations between the elementary invariant tensors
are induced by the relations of the above type. In other
words, the kernel of the map ${\mathcal R}_n$ in~(\ref{preziji_to?})
is generated by the expressions 
\[
\sum_{\sigma \in \Sigma_S}{\rm sgn\/}(\sigma) \cdot \sigma \in
 \bfk[\Sigma_k],
\]
where $S$ and $\Sigma_S$ are as above. Observe that, with the
convention used in~(\ref{jdu_k_doktorovi}) involving the inverses of
$\sigma$ in the right hand side, ${\mathcal R}_n$ is a ring
homomorphism.

\begin{definition}
\label{stab}
By the {\em stable range\/} we mean the situation when $\dim(V)
\geq k$, that is, when the map 
${\mathcal R}_n$ in~(\ref{preziji_to?}) is a monomorphism.
\end{definition}

\section{Graphs appear: An example}
\label{s2}

In this section we analyze an example that illustrates how the Invariant
Tensor Theorem leads to graphs.
We are going to describe invariant tensors in 
$\Lin\left(\adj\otexp V2 \sqot \Lin(\otexp V2,V),V\right)$. The canonical 
identifications~(\ref{conon}) determine a
$\GLV$-equivariant isomorphism
\[
\Phi : \Lin\left(\adj\otexp V2 \ot \Lin(\otexp V2,V),V\right) 
\cong \Lin(\otexp V3,\otexp V3).
\]
Applying the Invariant Tensor Theorem to $\Lin(\otexp V3,\otexp V3)$,
one concludes that the subspace $\Lin_\GLV(\otexp V2 \sqot \Lin(\otexp
V2,V),V)$ is spanned by $\Phi^{-1}(t_\sigma)$, $\sigma \in \Sigma_3$,
and that these generators
are linearly independent if $\dim(V) \geq 3$. It is a simple exercise
to calculate the tensors $\Phi^{-1}(t_\sigma)$ explicitly. The results
are shown in the second column of the table in Figure~\ref{table} in
which $X \ot Y \ot F$ is an element of $\otexp V2 \sqot \Lin(\otexp
V2,V)$ and $\Tr(-)$ the trace of a linear map $V \to V$.

\begin{figure}
\unitlength.9cm
\begin{picture}(15,14)(-1,-12)
%NADPIS
\put(0,-.5){
\put(1,1){\makebox(0,0)[l]{$\Phi^{-1}(t_\sigma)$:}}
\put(8,1.45){\makebox(0,0)[l]{coordinate}}
\put(8,.9){\makebox(0,0)[l]{form:}}
\put(11,1){\makebox(0,0)[l]{graph:}}
}

\thinlines
\put(-2,0){\line(1,0){16}}
\put(-2,0.1){\line(1,0){16}}
\put(0.6,1){\line(0,-1){12.7}}
\put(0.7,1){\line(0,-1){12.7}}
\put(7.75,1){\line(0,-1){12.7}}
\put(10.6,1){\line(0,-1){12.7}}

%PRVNI RADA
\put(-2,-1){\makebox(0,0)[l]{$\sigma = {\it identity}$}}
\put(1,-1){\makebox(0,0)[l]{$X\ot Y \ot F \mapsto F(X,Y)$}}
\put(8,-1){\makebox(0,0)[l]{$X^jY^kF^i_{jk}e_i$}}
\put(11.8,-1.5){
\unitlength.5cm
\put(0,2){\makebox(0,0)[cc]{\hskip .5mm$\bbox$}}
\put(0,1.08){\vector(0,1){.85}}
\put(0,1){\makebox(0,0)[cc]{\Large$\bullet$}}
\put(0.3,1.2){\makebox(0,0)[l]{\scriptsize$F$}}
\put(-1,-1){
\put(0,1){{\vector(1,1){.92}}}
\put(0,1){\makebox(0,0)[cc]{\Large$\bullet$}}
\put(0,0.4){\makebox(0,0)[t]{\scriptsize$X$}}
}
\put(1,-1){
\put(0,1){{\vector(-1,1){.92}}}
\put(0,1){\makebox(0,0)[cc]{\Large$\bullet$}}
\put(0,.4){\makebox(0,0)[t]{\scriptsize$Y$}}
}}

%DRUHA RADA
\put(-2,-3){\makebox(0,0)[l]{$\sigma =$}}
\put(-1,-3){\sigmadva}
\put(1,-3){\makebox(0,0)[l]{$X\ot Y \ot F \mapsto F(Y,X)$}}
\put(8,-3){\makebox(0,0)[l]{$X^jY^kF^i_{kj}e_i$}}
\put(11.8,-3.5){
\unitlength.5cm
\put(0,2){\makebox(0,0)[cc]{\hskip .5mm$\bbox$}}
\put(0,1.08){\vector(0,1){.85}}
\put(0,1){\makebox(0,0)[cc]{\Large$\bullet$}}
\put(0.3,1.2){\makebox(0,0)[l]{\scriptsize$F$}}
\put(-1,-1){
\put(0,1){{\vector(1,1){.92}}}
\put(0,1){\makebox(0,0)[cc]{\Large$\bullet$}}
\put(0,0.4){\makebox(0,0)[t]{\scriptsize$Y$}}
}
\put(1,-1){
\put(0,1){{\vector(-1,1){.92}}}
\put(0,1){\makebox(0,0)[cc]{\Large$\bullet$}}
\put(0,.4){\makebox(0,0)[t]{\scriptsize$X$}}
}}

%TRETI RADA
\put(-2,-5){\makebox(0,0)[l]{$\sigma =$}}
\put(-1,-5){\sigmatri}
\put(1,-5){\makebox(0,0)[l]{$X\ot Y \ot F \mapsto Y \otimes \Tr(F(X,-))$}}
\put(8,-5){\makebox(0,0)[l]{$X^jY^iF^k_{jk}e_i$}}
\put(11.1,-5.2){\borelioza YX}

%CTVRTA RADA
\put(-2,-7){\makebox(0,0)[l]{$\sigma =$}}
\put(-1,-7){\sigmapet}
\put(1,-7){\makebox(0,0)[l]{$X\ot Y \ot F \mapsto Y \otimes \Tr(F(-,X))$}}
\put(8,-7){\makebox(0,0)[l]{$X^jY^iF^k_{kj}e_i$}}
\put(11.1,-7.2){\boreliozaInv YX}

%PATA RADA
\put(-2,-9){\makebox(0,0)[l]{$\sigma =$}}
\put(-1,-9){\sigmactyri}
\put(1,-9){\makebox(0,0)[l]{$X\ot Y \ot F \mapsto X \otimes \Tr(F(-,Y))$}}
\put(8,-9){\makebox(0,0)[l]{$X^iY^jF^k_{kj}e_i$}}
\put(11.1,-9.2){\boreliozaInv XY}

%SESTA RADA
\put(-2,-11){\makebox(0,0)[l]{$\sigma =$}}
\put(-1,-11){\sigmasest}
\put(1,-11){\makebox(0,0)[l]{$X\ot Y \ot F \mapsto X \otimes \Tr(F(Y,-))$}}
\put(8,-11){\makebox(0,0)[l]{$X^iY^jF^k_{jk}e_i$}}
\put(11.1,-11.2){\borelioza XY}

\put(14.2,-1){\brace}
\put(14.2,-5){\brace}
\put(14.2,-9){\brace}

\end{picture}
\caption{\label{table} Invariant tensors in $\Lin(\otexp V2 \ot
\Lin(\otexp V2,V),V)$. The meaning of vertical braces on the right is
explained in Example~\protect\ref{456}.}
\end{figure}

Let us fix a basis $\{\Rada e1n\}$ of $V$ and write $X = X^ae_a$, $Y =
Y^ae_a$ and $F(e_a,e_b) = F^c_{ab} e_c$, for some scalars $X^a, Y^a, F^c_{ab}
\in \bfk$, $1\leq a,b,c \leq n$. The corresponding 
coordinate forms of the elementary
tensors are shown in the
third column of the table. Observe that the expressions in this
column are all possible {\em contractions of indices\/} of the tensors $X$,
$Y$ and~$F$.

The contraction schemes for indices are encoded by the rightmost
column as follows. Given a graph $G$ from this column, decorate its
edges by symbols $i,j,k$. For example, for the graph in the bottom
right corner of the table, choose the decoration
\[
\unitlength.9cm
\begin{picture}(5,2)(-1.5,.7)
\put(0,2){\put(0.03,0){\makebox(0,0)[cc]{$\bbox$}}}
\put(0,1){\vector(0,1){.935}}
\put(0,1){\makebox(0,0)[cc]{\Large$\bullet$}}
\put(0,.7){\makebox(0,0)[t]{\scriptsize$X$}}
\put(.4,.3){
\put(2.09,.85){\makebox(0,0)[cc]{\oval(1.5,1.5)[b]}}
\put(2.09,1.15){\makebox(0,0)[cc]{\oval(1.5,1.5)[t]}}
\put(2.85,1.22){\line(0,1){.3}}
\put(.7,.7){\vector(1,1){.55}}
\put(.7,.7){\makebox(0,0)[cc]{\Large$\bullet$}}
\put(1.35,1.35){\makebox(0,0)[cc]{\Large$\bullet$}}
\put(1.32,1.25){\makebox(0,0)[tc]{\vector(0,1){0}}}
\put(1.2,1.45){\makebox(0,0)[r]{\scriptsize $F$}}
\put(.7,0.4){\makebox(0,0)[t]{\scriptsize $Y$}}
\put(-.5,1.2){\makebox(0,0)[r]{\scriptsize$i$}}
\put(1,1){\makebox(0,0)[lt]{\scriptsize$j$}}
\put(3,1.4){\makebox(0,0)[l]{\scriptsize$k$}}
\put(3.4,1){\makebox(0,0){.}}
}
\end{picture}
\]  
To each vertex of this edge-decorated graph we assign the coordinates of
the corresponding tensors with the names of indices determined by
decorations of edges adjacent to this vertex. For example, to the
$F$-vertex we assign $F^k_{jk}$, because its left ingoing edge is
decorated by $j$ and its right ingoing edge which happens to be the
same as its  outgoing edge, is decorated by $k$. The vertex \anchor,
called {\em the anchor\/}, plays a special role. 
We assign to it the basis of $V$ indexed by the decoration of its
ingoing edge. We get
\[
\unitlength.9cm
\begin{picture}(5,2)(-2,.7)
\put(0,2){\put(0.03,0){\makebox(0,0)[cc]{$\bbox$}}}
\put(0,2.3){\put(0.03,0){\makebox(0,0)[b]{$e_i$}}}
\put(0,1){\vector(0,1){.935}}
\put(0,1){\makebox(0,0)[cc]{\Large$\bullet$}}
\put(0,.8){\makebox(0,0)[t]{\scriptsize$X^i$}}
\put(.4,.3){
\put(2.09,.85){\makebox(0,0)[cc]{\oval(1.5,1.5)[b]}}
\put(2.09,1.15){\makebox(0,0)[cc]{\oval(1.5,1.5)[t]}}
\put(2.85,1.22){\line(0,1){.3}}
\put(.7,.7){\vector(1,1){.55}}
\put(.7,.7){\makebox(0,0)[cc]{\Large$\bullet$}}
\put(1.35,1.35){\makebox(0,0)[cc]{\Large$\bullet$}}
\put(1.32,1.25){\makebox(0,0)[tc]{\vector(0,1){0}}}
\put(1.1,1.45){\makebox(0,0)[r]{\scriptsize $F^k_{jk}$}}
\put(.7,0.5){\makebox(0,0)[t]{\scriptsize $Y^j$}}
\put(-.5,1.2){\makebox(0,0)[r]{\scriptsize$i$}}
\put(1,1){\makebox(0,0)[lt]{\scriptsize$j$}}
\put(3,1.4){\makebox(0,0)[l]{\scriptsize$k$}}
}
\end{picture}
\]  
As the final step we take the product of the factors assigned to
vertices and perform the summation over repeated indices. The result
is
\[
\sum_{1 \leq i,j,k \leq n}X^iY^jF^k_{jk}e_i.
\] 
In this formula we made an exception from Einstein's convention and
wrote the summation explicitly to emphasize the idea of the
construction.  A formal general definition of this process of
interpreting graphs as contraction schemes is given below.

Let $\wGr_{\rm ex}$ be the vector space spanned by the six graphs in
the last column of the table; the hat indicates that the graphs are
not oriented. The subscript ``ex'' is an abbreviation of
``example,'' and distinguishes this space from other spaces
with similar names used throughout the note. The procedure described
above gives an epimorphism
\begin{equation}
\label{boli_mne_v_krku}
\wrR_n : \wGr_{\rm ex} \to \Lin_\GLV\left(\adj\otexp V2 \ot  
\Lin(\otexp V2,V),V\right)
\end{equation}
which is an isomorphism if $n \geq 3$. The map $\wR_n$
defined in this way obviously does not depend on the choice of the basis
$\{\Rada e1n\}$ of $V$.

The space $\wGr_{\rm ex}$ can also be defined as the span of all
directed graphs with three unary vertices
\begin{equation}
\label{aaa}
\unitlength .5cm
\begin{picture}(0,1)(0,.2)
\put(-.5,0){\makebox(0,0)[cc]{\Large$\bullet$}}
\put(0,0){\makebox(0,0)[bl]{\scriptsize $X$}}
\put(0.9,0.2){\makebox(0,0){,}}
\put(-.5,0){\vector(0,1){1.2}}
\end{picture}
\hskip 3em
\unitlength .5cm
\begin{picture}(0,1)(0,.2)
\put(-.5,0){\makebox(0,0)[cc]{\Large$\bullet$}}
\put(0,0){\makebox(0,0)[bl]{\scriptsize $Y$}}
\put(-.5,0){\vector(0,1){1.2}} \hskip 1em
\put(0.9,0.2){\makebox(0,0)[lb]{and}}
\end{picture}
\hskip 4.5em
\raisebox{-1em}{\rule{0pt}{0pt}}
\unitlength .4cm
\begin{picture}(0,1.4)(0,-.3)
\put(-.45,.55){\makebox(0,0)[cc]{$\bbox$}}
\put(-.5,-.8){\vector(0,1){1.2}}
\put(0.7,-.25){\makebox(0,0){,}}
\end{picture}
\end{equation}
and one ``planar'' binary vertex
\begin{equation}
\label{bbb}
\unitlength.5cm
\begin{picture}(5,1.5)(-2,.4)
\put(0,1.08){\vector(0,1){.85}}
\put(0,1){\makebox(0,0)[cc]{\Large$\bullet$}}
\put(0.3,1.2){\makebox(0,0)[l]{\scriptsize$F$}}
%\put(1,1){\makebox(0,0)[l]{.}}
\put(-1,-1){
\put(0,1){{\vector(1,1){.92}}}
}
\put(1,-1){
\put(0,1){{\vector(-1,1){.92}}}
}
\end{picture}
\end{equation}
whose planarity means that its inputs are linearly 
ordered. In pictures, this order is determined by reading the inputs 
from left to right.

\section{The general case}
\label{s3}

Let us generalize calculations in
Section~\ref{s2} and describe
$\GLV$-invariant elements in 
\begin{equation}
\label{zabiraji_antibiotika?}
\Lin\left(\adj\Lin(\otexp V{{\hh}_1},\otexp V{p_1}) \ot \cdots \ot \Lin(\otexp
V{{\hh}_r},\otexp V{p_r}),\Lin(\otexp Vc,\otexp Vd)\right),
\end{equation}
where $r,\Rada p1r,\Rada {\hh}1r,c$ and $d$ are non-negative integers.
The above space is canonically isomorphic to
\[
\otexp{{V^*}}{p_1} \ot \otexp V{{\hh}_1} \ot \cdots \ot
\otexp{{V^*}}{p_r} \ot \otexp V{{\hh}_r} \ot \otexp{{V^*}}{c} \ot \otexp V{d},
\]
which is in turn isomorphic to\label{888}
\begin{equation}
\label{budu?}
\otexp {{V^*}}{(p_1 + \cdots + p_r + c)}
\ot \otexp V{({\hh}_1 + \cdots+ {\hh}_r + d)},
\end{equation}
via the isomorphism that moves all $V^*$-factors to the left, without
changing their relative order. By the last and first isomorphisms
in~(\ref{conon}), the space in~(\ref{budu?}) is isomorphic to
\[
\Lin(\otexp V{(p_1 + \cdots + p_r + c)},\otexp V{({\hh}_1 + \cdots+ {\hh}_r + d)}).
\]
We will denote the composite isomorphism
between~(\ref{zabiraji_antibiotika?}) and the space in the above
display by $\Phi$. 
Since all isomorphisms above are $\GLV$-equivariant, $\Phi$ is
equivariant, too,  thus the
space~(\ref{zabiraji_antibiotika?}) may  contain nontrivial 
$\GLV$-equivariant maps only if
\begin{equation}
\label{vyleci_mne_to?}
p_1 + \cdots + p_r + c = {\hh}_1 + \cdots + {\hh}_r + d.
\end{equation}

Denote by $\wGr$ the space spanned by all directed graphs with $r+1$ 
planar vertices
\[
\raisebox{-3.5em}{\rule{0pt}{0pt}}
\unitlength 4mm
\linethickness{0.4pt}
\begin{picture}(20,5.1)(10.5,19.4)
%horni vektory
\put(20,20){\vector(1,1){2}}
\put(20,20){\vector(-1,1){2}}
\put(20,20){\vector(-1,2){1}}
%dolni vektory
\put(18,18){\vector(1,1){1.9}}
\put(22,18){\vector(-1,1){1.9}}
\put(19,18){\vector(1,2){.935}}
\put(20,20){\makebox(0,0)[cc]{\Large$\bullet$}}
\put(19,20){\makebox(0,0)[r]{\scriptsize $F_1$}}
\put(20.5,18){\makebox(0,0)[cc]{$\ldots$}}
\put(20,17){\makebox(0,0)[cc]{%
   $\underbrace{\rule{16mm}{0mm}}_{\mbox{\scriptsize ${\hh}_1$ inputs}}$}}
\put(0,40){
\put(20.5,-18){\makebox(0,0)[cc]{$\ldots$}}
\put(20,-17){\makebox(0,0)[cc]{%
   $\overbrace{\rule{16mm}{0mm}}^{\mbox{\scriptsize $p_1$ outputs}}$}}}
\end{picture}
\hskip -2.8cm
\raisebox{1.2mm}{$\cdots$}
\hskip -2.2cm
\begin{picture}(20,5.1)(10.5,19.4)
%horni vektory
\put(20,20){\vector(1,1){2}}
\put(20,20){\vector(-1,1){2}}
\put(20,20){\vector(-1,2){1}}
%dolni vektory
\put(18,18){\vector(1,1){1.9}}
\put(22,18){\vector(-1,1){1.9}}
\put(19,18){\vector(1,2){.935}}
\put(20,20){\makebox(0,0)[cc]{\Large$\bullet$}}
\put(19,20){\makebox(0,0)[r]{\scriptsize $F_r$}}
\put(20.5,18){\makebox(0,0)[cc]{$\ldots$}}
\put(20,17){\makebox(0,0)[cc]{%
   $\underbrace{\rule{16mm}{0mm}}_{\mbox{\scriptsize ${\hh}_r$ inputs}}$}}
\put(0,40){
\put(20.5,-18){\makebox(0,0)[cc]{$\ldots$}}
\put(20,-17){\makebox(0,0)[cc]{%
   $\overbrace{\rule{16mm}{0mm}}^{\mbox{\scriptsize $p_r$ outputs}}$}}}
\end{picture}
\hskip -2.8cm
\raisebox{1.2mm}{\mbox{and}}
\hskip -2.2cm
\begin{picture}(20,5.1)(10.5,19.4)
%horni vektory
\put(20,20){\vector(1,1){2}}
\put(20,20){\vector(-1,1){2}}
\put(20,20){\vector(-1,2){1}}
%dolni vektory
\put(18,18){\vector(1,1){1.8}}
\put(22,18){\vector(-1,1){1.8}}
\put(19,18){\vector(1,2){.9}}
\put(20,20){\makebox(0,0)[cc]{$\bbox$}}
\put(20.5,18){\makebox(0,0)[cc]{$\ldots$}}
\put(20,17){\makebox(0,0)[cc]{%
   $\underbrace{\rule{16mm}{0mm}}_{\mbox{\scriptsize $d$ inputs}}$}}
\put(0,40){
\put(20.5,-18){\makebox(0,0)[cc]{$\ldots$}}
\put(20,-17){\makebox(0,0)[cc]{%
   $\overbrace{\rule{16mm}{0mm}}^{\mbox{\scriptsize $c$ outputs}}$}}}
\end{picture} \hskip -3cm , \hskip 2cm
\]
where planarity means that linear orders of the sets of input and
output edges are specified. 
Observe that the number of edges of each graph spanning $\wGr$ equals
the common value of the sums in~(\ref{vyleci_mne_to?}).
For each graph $G \in \wGr$ we define a
$\GLV$-equivariant map $\wrR_n(G)$ in the
space~(\ref{zabiraji_antibiotika?}) as follows.

As in Section~\ref{s2},
choose a basis $(\Rada e1n)$ of $V$ and let
$(\rada{e^1}{e^n})$ be the corresponding dual basis of $V^*$. For
$F_i \in \Lin(\otexp V{{\hh}_i},\otexp V{p_i})$, $1 \leq i \leq r$, write
\[
F_i = 
{F_i \hskip .2em}^{a_1^i,\ldots,a^i_{p_i}}_{b_1^i,\ldots,b^i_{{\hh}_i}} \
e_{a_1} \ot \cdots \ot e_{a_{p_i}}
\otimes e^{b_1} \ot \cdots \ot e^{b_{{\hh}_i}}
\]
with some scalars 
${F_i \hskip .2em}^{a_1^i,\ldots,a^i_{p_i}}_{b_1^i,\ldots,b^i_{{\hh}_i}} \in \bfk$
or, more concisely,
$F_i = {F_i \hskip .2em}^{A^i}_{B^i}\  e_{A^i} \otimes e^{B^i}$,
where $A^i$ abbreviates the multiindex $(a_1^i,\ldots,a^i_{p_i})$,
$B^i$ the multiindex $(b_1^i,\ldots,b^i_{{\hh}_i})$, $e_{A^i} := e_{a_1} \ot
\cdots \ot e_{a_{p_i}}$, $e^{B^i} :=  e^{b_1} \ot \cdots \ot
e^{b_{{\hh}_i}}$ and, as everywhere in this paper, summations over
repeated (multi)indices are assumed. 

A {\em labelling\/} of a graph $G \in \wGr$ is a function $\ell :
\Edg(G) \to \{\rada 1n\}$, where $\Edg(G)$ denotes the set of edges of
$G$. Let $\Lab(G)$ be the set of all labellings of $G$.  For $\ell \in
\Lab(G)$ and $1 \leq i \leq r$, define $A^i(\ell)$ to be the multiindex
$(a_1^i,\ldots,a^i_{p_i})$ such that $a^i_s$ equals $\ell(e)$, where
$e$ is the edge that starts at the $s$-th output of the vertex $F_i$,
$1 \leq s \leq p_i$. Likewise, put $I(\ell) := (\Rada i1c)$ with $i_t
:= \ell(e)$, where now $e$ is the edge that starts at the $t$-th
output of the \raisebox{-.1em}{$\bbox$}-vertex, $1 \leq t \leq c$.
Let $B^i(\ell)$ and $J(\ell)$ have similar obvious meanings, with
`inputs' taken instead of `outputs.' For $F_1 \ot \cdots\ot F_r \in
\Lin(\otexp V{{\hh}_1},\otexp V{p_1}) \ot \cdots \ot \Lin(\otexp V{{\hh}_r},\otexp
V{p_r})$ define finally
\begin{equation}
\label{beru_antibiotika}
\wrR_n(G)(F_1 \ot \cdots\ot  F_r) :=
\sum_{\ell \in \Lab(G)} 
{F_1 \hskip .2em}^{A^1(\ell)}_{B^1(\ell)} \ot \cdots\ot 
{F_r \hskip .2em}^{A^r(\ell)}_{B^r(\ell)}\
 e_{J(\ell)} \ot e^{I(\ell)} \in \Lin(\otexp Vc,\otexp Vd). 
\end{equation}

It is easy to check that $\wrR_n(G)$ is a $\GLV$-fixed
element of the space~(\ref{zabiraji_antibiotika?}).
The nature of the summation in~(\ref{beru_antibiotika}) is close
to the {\em state sum model\/} for link invariants,
see~\cite[Section~I.8]{kauffman:KnotsandPhysics}, with states being
the values of labels of the edges of the graph.

\begin{proposition}
\label{zabere_to?}
Let $r,\Rada p1r,\Rada {\hh}1r,c$ and $d$ be non-negative integers. Then
the map 
\[
\wrR_n :\wGr \to \Lin_\GLV\left(\adj\Lin(\otexp V{{\hh}_1},\otexp V{p_1}) 
\ot \cdots \ot \Lin(\otexp
V{{\hh}_r},\otexp V{p_r}),\Lin(\otexp Vc,\otexp Vd)\right)
\] 
defined by~(\ref{beru_antibiotika}) is an epimorphism.  If $n
\geq e$, where $e$ is the number of edges of graphs spanning $\wGr$
and $n = \dim(V)$, $\wrR_n$ is also an isomorphism.
\end{proposition}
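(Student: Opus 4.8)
The plan is to reduce Proposition~\ref{zabere_to?} to the Invariant Tensor Theorem via the equivariant isomorphism $\Phi$ already constructed, exactly as was done in the example of Section~\ref{s2}. First I would set $k := p_1 + \cdots + p_r + c = {\hh}_1 + \cdots + {\hh}_r + d$ (the common value forced by~(\ref{vyleci_mne_to?})), so that $\Phi$ identifies the space~(\ref{zabiraji_antibiotika?}) with $\Lin(\otexp Vk,\otexp Vk)$ and hence $\Lin_\GLV$ of~(\ref{zabiraji_antibiotika?}) with $\Lin_\GLV(\otexp Vk,\otexp Vk)$. By the Invariant Tensor Theorem the latter is spanned by the $t_\sigma$, $\sigma \in \Sigma_k$, and is free on them when $n \geq k = e$. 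So it suffices to exhibit, for each $\sigma \in \Sigma_k$, a graph $G_\sigma \in \wGr$ with $\wrR_n(G_\sigma) = \Phi^{-1}(t_\sigma)$, and conversely to check that every generator $\wrR_n(G)$ of the image equals $\Phi^{-1}(t_\sigma)$ for an appropriate $\sigma$; this gives surjectivity. For the isomorphism statement when $n \geq e$, I would show moreover that $G \mapsto \sigma$ sets up a bijection between the spanning graphs of $\wGr$ and $\Sigma_k$, so that $\wrR_n$ is the composite of the bijection $\bfk[\Sigma_k] \xrightarrow{\sim} \wGr$ with the isomorphism ${\mathcal R}_n$ of~(\ref{preziji_to?}) followed by $\Phi^{-1}$.

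The combinatorial core is the bijection between $\Sigma_k$ and the spanning graphs. The key observation is this: a directed graph $G$ of the prescribed shape — with planar vertices $F_1,\dots,F_r$ and the anchor $\bbox$, vertex $F_i$ having ${\hh}_i$ ordered inputs and $p_i$ ordered outputs, and $\bbox$ having $d$ ordered inputs and $c$ ordered outputs — is nothing but a perfect matching between the set of $e = p_1+\cdots+p_r+c$ ``output half-edges'' (the $p_i$ outputs of each $F_i$ together with the $c$ outputs of $\bbox$) and the set of $e = {\hh}_1+\cdots+{\hh}_r+d$ ``input half-edges''. Since both half-edge sets are linearly ordered by the planar structure — first list the outputs of $F_1$, then of $F_2$, \dots, then of $F_r$, then of $\bbox$ for the source side, and the analogous order built from inputs for the target side — a perfect matching between them is precisely a permutation in $\Sigma_k$. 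I would make this order convention explicit and check it is compatible with the way $\Phi$ reshuffles the tensor factors in~(\ref{budu?}): $\Phi$ moves all $V^*$-factors to the left keeping relative order, and the $V^*$-factors of $\Lin(\otexp V{{\hh}_i},\otexp V{p_i}) = \otexp{{V^*}}{p_i}\ot\otexp V{{\hh}_i}$ are exactly the ones indexed by the $p_i$ outputs, so the ordering on output half-edges above is the ordering of the source slots of $t_\sigma$, and similarly on the input side.

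With the bijection in hand, the identity $\wrR_n(G_\sigma) = \Phi^{-1}(t_\sigma)$ becomes a matter of unwinding definitions: the summation over labellings $\ell \in \Lab(G)$ in~(\ref{beru_antibiotika}) is literally the Einstein summation implementing the index contractions dictated by the matching, i.e.\ setting the label of an output half-edge equal to the label of the input half-edge it is matched to. On the $t_\sigma$ side, formula~(\ref{jdu_k_doktorovi}), transported through the chain of canonical isomorphisms~(\ref{conon}), produces exactly the same contraction pattern: each contraction $U^* \ot U \to \bfk$ used in $\Phi$ pairs an output slot with an input slot, and ``which output with which input'' is recorded by $\sigma$. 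I expect the main obstacle to be purely bookkeeping: tracking the several linear orders (inputs vs.\ outputs at each vertex, the global concatenated orders, and the reordering performed by $\Phi$) consistently enough that the claimed bijection genuinely intertwines $\wrR_n$ with ${\mathcal R}_n$, rather than with ${\mathcal R}_n$ precomposed by some spurious permutation. Once the conventions are pinned down — which I would do by re-deriving the six-row table of Figure~\ref{table} as the instance $r=1$, ${\hh}_1 = 2$, $p_1 = 1$, $c = d = 0$, $k = 3$ — surjectivity follows from the surjectivity half of the Invariant Tensor Theorem, and injectivity for $n \geq e$ follows from its linear-independence half together with the fact that $G \mapsto \sigma$ is a bijection on a basis.
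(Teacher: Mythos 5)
Your proposal is correct and follows essentially the same route as the paper: the paper's proof also reduces to the Invariant Tensor Theorem via a commuting square whose left vertical map $\Psi$ is exactly your bijection between spanning graphs and permutations in $\Sigma_k$ (obtained by linearly ordering the concatenated output and input half-edge sets and reading a graph as a matching), with $\Phi$ on the right. The only cosmetic difference is that the paper disposes of the case where~(\ref{vyleci_mne_to?}) fails by a separate remark that both spaces are then trivial, which you tacitly assume.
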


Observe that we do not need to assume~(\ref{vyleci_mne_to?}) in
Proposition~\ref{zabere_to?}.  If~(\ref{vyleci_mne_to?}) is not
satisfied, then there are no $\GLV$-invariant elements
in~(\ref{zabiraji_antibiotika?}) and also the space $\wGr$ is trivial,
thus $\wrR_n$ is an isomorphism of trivial spaces.

\begin{proof}[Proof of Proposition~\ref{zabere_to?}] By the above
observation, we may assume~(\ref{vyleci_mne_to?}). 
Consider the diagram
\begin{equation}
\label{nehoji_se_to}
\raisebox{-1.9cm}{\rule{0pt}{4cm}}
\unitlength 1cm
\linethickness{0.4pt}
\begin{picture}(15,1.2)(-.5,1.5)
\put(0,0){\makebox(0,0){$\wGr$}}
\put(8.6,0){\makebox(0,0){$\Lin_\GLV\left(\adj\Lin(\otexp V{{\hh}_1},\otexp V{p_1}) 
          \ot \cdots \ot \Lin(\otexp
          V{{\hh}_r},\otexp V{p_r}),\Lin(\otexp Vc,\otexp Vd)\right)$}}
\put(0,3){\makebox(0,0){$\bfk[\Sigma_k]$}}
\put(8.6,3){\makebox(0,0){$\Lin_\GLV(\otexp V{(p_1 + \cdots + p_r +
          c)},\otexp V{({\hh}_1 + \cdots+ {\hh}_r + d)})$}}
\put(.6,0){\vector(1,0){1.8}}
\put(.75,3){\vector(1,0){4}}
\put(0,.5){\vector(0,1){2}}
\put(8,.5){\vector(0,1){2}}
\put(8.3,1.5){\makebox(0,0)[l]{$\Phi$}}
\put(0.3,1.5){\makebox(0,0)[l]{$\Psi$}}
\put(7.7,1.5){\makebox(0,0)[r]{$\cong$}}
\put(-0.3,1.5){\makebox(0,0)[r]{$\cong$}}
\put(1.5,.15){\makebox(0,0)[b]{$\wrR_n$}}
\put(2.6,3.15){\makebox(0,0)[b]{${\mathcal R}_n$}}
\end{picture}
\end{equation}
in which ${\mathcal R}_n$ is the map~(\ref{preziji_to?}), $\wrR_n$ is
defined in~(\ref{beru_antibiotika}) and $\Phi$ is the composition of
canonical isomorphisms and reshufflings of factors described on
page~\pageref{888} above. The map $\Psi$ is defined as follows.

Let us denote, for the purposes of this proof only, by $\OUT(F_i)$ the
linearly ordered
set of outputs of the $F_i$-vertex, $1 \leq i \leq r$, 
and by $\OUT(\ctverecek)$ the linearly
ordered set of
outputs of \hskip 2pt  \raisebox{-1pt}{$\bbox$}. 
The set $\OUT := \OUT(F_1) \cup \cdots \cup
\OUT(F_r) \cup \OUT(\ctverecek)$ is linearly ordered  by requiring that
\[
\OUT(F_1) < \cdots < \OUT(F_r) < \OUT(\ctverecek)
\]
(we believe that the meaning of this shorthand is obvious). Let $\IN$
be the linearly ordered set of inputs defined in the similar way. The
orders define unique isomorphisms
\begin{equation}
\label{mam_chripku}
\OUT \cong (\rada 1k) \ \mbox { and } \IN \cong (\rada 1k)
\end{equation}
 of ordered sets.
 
Since graphs spanning $\wGr$ are determined by specifying how the
outputs of vertices are connected to its inputs, there exists a
one-to-one correspondence $G \leftrightarrow \varphi_G$ between graphs
$G \in \wGr$ and isomorphisms $\varphi_G : \OUT \stackrel{\cong}{\to}
\IN$. Given~(\ref{mam_chripku}), such $\varphi_G$ can be interpreted
as an element of the symmetric group $\Sigma_k$. The map $\Psi$ is
then defined by $\Psi(G) := \varphi_G$.

It is simple to verify that the diagram~(\ref{nehoji_se_to}) commutes,
so the proposition follows from the Invariant Tensor Theorem.
\end{proof}

\section{Symmetries occur}
\label{s4}

In the light of diagram~(\ref{nehoji_se_to}),
Proposition~\ref{zabere_to?} may look just as a clumsy reformulation of
the Invariant Tensor Theorem.  Graphs become relevant when
symmetries occur.

\begin{example}
\label{456}
Let $\Sym(\otexp V2,V) \subset \Lin(\otexp V2,V)$ be the subspace of
symmetric bilinear maps, i.e.~maps satisfying $f(v',v'') =
f(v'',v')$ for $v',v'' \in V$.  Let us explain how to use
calculations of Section~\ref{s2} to describe
$\GLV$-equivariant maps in $\Lin\left(\adj\otexp V2 \sqot \Sym(\otexp
V2,V),V\right)$.
 
The right $\Sigma_2$-action on $\Lin(\otexp V2,V)$ given by permuting
the inputs of bilinear maps is such that the space $\Sym(\otexp V2,V)$
equals the subspace $\Lin(\otexp V2,V)^{\Sigma_2}$ of $\Sigma_2$-fixed
elements.  This right $\Sigma_2$-action induces a left
$\Sigma_2$-action on $\Lin\left(\adj\otexp V2 \sqot \Lin(\otexp
V2,V),V\right)$ which commutes with the $\GLV$-action, therefore it
restricts to a left $\Sigma_2$-action on the subspace
$\Lin_\GLV\left(\adj\otexp V2 \sqot \Lin(\otexp V2,V),V\right)$ of
$\GLV$-equivariant maps.

There is also a left $\Sigma_2$-action on the linear space $\wGr_{\rm ex}$
interchanging the inputs of the $F$-vertices of generating graphs. It
is simple to check that the map~(\ref{boli_mne_v_krku}) of
Section~\ref{s2} is equivariant with respect
to these two $\Sigma_2$-actions, hence it induces the map
\begin{equation}
\label{porad_mi_neni_dobre}
\Sigma_2  \backslash \wrR_n :\Sigma_2 \backslash \wGr_{\rm ex} 
\to \Sigma_2  \backslash 
\Lin_\GLV\left(\adj\otexp V2 \ot  \Lin(\otexp V2,V),V\right)
\end{equation}
of left cosets. Observe that, by a standard duality argument,
\begin{equation}
\label{ttt}
\Sigma_2 \backslash\Lin_\GLV\left(\adj\otexp V2 \ot 
\Lin(\otexp V2,V),V\right)\cong 
\Lin_\GLV\left(\adj\otexp V2 \ot \Sym(\otexp V2,V),V\right).
\end{equation}

Let us denote $\wGr_{{\rm ex},\bullet} := \Sigma_2 \backslash\wGr_{\rm
ex}$.  The bullet $\bullet$ in the subscript signalizes the presence
of vertices with fully symmetric inputs.  By definition, graphs $G',
G'' \in \wGr_{\rm ex}$ are identified in the quotient 
$\wGr_{{\rm ex},\bullet}$ if they
differ only by the order of inputs of the $F$-vertex. In
Figure~\ref{table}, this identification is indicated by vertical
braces.  We see that $\wGr_{{\rm ex},\bullet}$ 
is again a space {\em spanned by
graphs,\/} this time with no linear order on the inputs of the
$F$-vertex. So we may {\em define\/} $\wGr_{{\rm ex},\bullet}$ as the space
spanned by directed graphs with vertices~(\ref{aaa}) and one binary
(ordinary, non-planar) vertex~(\ref{bbb}). We conclude by
interpreting~(\ref{porad_mi_neni_dobre}) as the map
\begin{equation}
\label{Phillips}
\wrR_n : \wGr_{{\rm ex},\bullet} \to \Lin_\GLV\left(\adj\otexp V2 \ot  
\Sym(\otexp V2,V),V\right).
\end{equation}
It follows from the properties of the map~(\ref{boli_mne_v_krku}) and
the characteristic zero assumption that $\wrR_n$ is always an
epimorphism and is an isomorphism if $n \geq 3$.
\end{example}

At this point we want to incorporate, by generalizing the pattern 
used in Example~\ref{456}, symmetries into Proposition~\ref{zabere_to?}.
Unfortunately, it turns out that treating the
space~(\ref{zabiraji_antibiotika?}) in full generality leads to a
notational disaster.
To keep the length of formulas within a
reasonable limit, we decided to {\em assume from now on\/} that 
$p_1= \cdots = p_r = 1$,
$c=0$ and $d=1$. This means that we will restrict our attention to 
maps in 
\begin{equation}
\label{red}
\Lin\left(\adj\Lin(\otexp V{{\hh}_1},V) \ot \cdots \ot \Lin(\otexp
V{{\hh}_r},V),V\right).
\end{equation}
For graphs this assumption implies that the vertices $\Rada F1r$
have precisely one output,  and that the anchor
$\hskip .2em\raisebox{-.1em}{\bbox}\hskip -.2em$ 
has one input and no outputs. The number of inputs of $F_i$ will be
called the {\em arity\/} of $F_i$, $1 \leq i \leq r$.
Condition~(\ref{vyleci_mne_to?}) reduces to
\[
r = {\hh}_1 + \cdots + {\hh}_r + 1
\]
and one also sees that $r$ equals the number of edges of the
generating graphs.

The above generality is sufficient for all applications we have in mind.
A modification to the general case is straightforward but 
notationally challenging.

The space $\Lin(\otexp V{\hh},V)$ admits, for each ${\hh} \geq 0$, a
natural right $\Sigma_{\hh}$-action given by permuting inputs of
multilinear maps.  
A {\em symmetry\/} of maps in $\Lin(\otexp V{\hh},V)$ 
will be specified by a subset  $\fin \subset
\bfk[\Sigma_{\hh}]$. We then denote
\[
\Lin_\fin(\otexp V{\hh},V) := \left\{\adj f \in  \Lin(\otexp V{\hh},V)
;\ f {\mathfrak s} = 0 \mbox {
  for each } {\mathfrak s} \in \fin\right\}. 
\]
For $\fin$ as above and a left $\Sigma_{\hh}$-module $U$, we will abbreviate
by $\fin \backslash U$ the left coset   $\fin U \backslash U$.

\begin{example}
\label{exxx}
Let $\fin := I_{\hh} \subset \bfk[\Sigma_\hh]$ be the augmentation ideal.
Then $\Lin_{I_{\hh}}(\otexp V{\hh},V)$ is the space of symmetric maps,
\[
\Lin_{I_{\hh}}(\otexp V{\hh},V) = \Sym(\otexp V{\hh},V),
\]
therefore the augmentation ideal describes the symmetry of the local 
coordinates of vector fields and their derivatives, 
see~\cite[Example~3.2]{markl:na}.
We leave as an exercise to describe in this language the spaces of
{\em anti\/}symmetric maps.
\end{example}

\begin{example}
\label{exxy}
Let ${\hh} := v+2$, $v \geq 0$, and let $\nabla \subset \bfk[\Sigma_{\hh}]$ be
the image of the augmentation ideal $I_v$ of $\bfk[\Sigma_v]$ in
$\bfk[\Sigma_{\hh}]$ under the map of group rings induced by the inclusion
$\Sigma_v \hookrightarrow \Sigma_v \times \Sigma_2
\hookrightarrow\Sigma_{\hh}$ that interprets permutations of $(\rada 1v)$
as permutations of $(\rada 1v,v+1,v+2)$ keeping the last two elements
fixed. Then $\Lin_\nabla(\otexp V{\hh},V)$ consists of multilinear maps
$\otexp V{(v+2)} \to V$ that are symmetric in the first $v$ inputs,
i.e.~multilinear maps possessing the symmetry of the Christoffel
symbols of linear connections and their derivatives, see
again~\cite[Example~3.2]{markl:na}. 
\end{example}

\begin{remark}
\label{patek_v_IHES}
It is clear how to generalize the above notion of symmetry to maps in
the left $\Sigma_p$- right $\Sigma_{\hh}$-module $\Lin(\otexp V{\hh},\otexp
Vp)$ for general $p,{\hh} \geq 0$. A symmetry of these maps
will be specified by
subsets $\Si \in \bfk[\Sigma_{{\hh}}]$ and $\So \in \bfk[\Sigma_{p}]$,
the corresponding subspaces will then be
\[
\Lin_{\Si}^{\So}(\otexp {V}{{\hh}},\otexp {V}{p}) :=
\left\{\adj f \in  \Lin(\otexp V{\hh},\otexp Vp)
;\ f {\mathfrak s} = 0 = {\mathfrak t} f \mbox {
for each } {\mathfrak s} \in \Si \mbox { and } {\mathfrak t} \in \So
\right\}.
\]
\end{remark}

Suppose we are given subsets $\fin_i \subset \bfk[\Sigma_{{\hh}_i}]$, $1
\leq i \leq r$.  Our aim is to describe $\GLV$-invariant elements in
the space
\begin{equation}
\label{reds}
\Lin\left(\adj\Lin_{\fin_1}(\otexp V{{\hh}_1},V) 
\ot \cdots \ot \Lin_{\fin_r}(\otexp
V{{\hh}_r},V),V\right).
\end{equation}
Let 
\[
\fin :=\fin_1 \cup \cdots \cup \fin_r
\subset \bfk[\Sigma_{{\hh}_1} \times \cdots \times \Sigma_{{\hh}_r}],
\]
where $\fin_i$ is, for $1 \leq i \leq r$, identified with
its image in $\bfk[\Sigma_{{\hh}_1} \times \cdots \times \Sigma_{{\hh}_r}]$
under the map induced by the group inclusion $\Sigma_{{\hh}_i} \hookrightarrow
\Sigma_{{\hh}_1} \times \cdots \times \Sigma_{{\hh}_r}$. 

As in Example~\ref{456}, we use the fact that, for $1 \leq i \leq r$,
each $\Lin(\otexp V{{\hh}_i},V)$ is a right $\Sigma_{{\hh}_i}$-space, hence
the tensor product $\Lin(\otexp V{{\hh}_1},V) \ot \cdots \ot \Lin(\otexp
V{{\hh}_r},V)$ has a natural right $\Sigma_{{\hh}_1} \times \cdots \times
\Sigma_{{\hh}_r}$-action which induces a left $\Sigma_{{\hh}_1} \times \cdots
\times \Sigma_{{\hh}_r}$-action on the space~(\ref{red}). This action
restricts to the subspace of $\GLV$-equivariant maps.

There is also a left $\Sigma_{{\hh}_1} \times \cdots \times
\Sigma_{{\hh}_r}$-action on the space $\wGr$ given by permuting, in the
obvious manner, the inputs of the vertices $\Rada F1r$ of generating
graphs. The map $\wrR_n$ of Proposition~\ref{zabere_to?} is equivariant
with respect to the above two actions and induces the map
\[
\fin \backslash \wrR_n : \fin \backslash \wGr
\to 
\fin \backslash 
\Lin_\GLV\left(\adj\Lin(\otexp V{{\hh}_1},V) \ot \cdots \ot \Lin(\otexp
V{{\hh}_r},V),V\right)
\]
of left quotients. Denoting  $\wGr_\fin := \fin \backslash \wGr$ 
and realizing that, by duality, the codomain of $\fin \backslash \wrR_n$ is 
isomorphic to the subspace of $\GLV$-fixed
elements in~(\ref{reds}), we obtain the map (denoted again $\wrR_n$)
\begin{equation}
\label{jeste_jeden_den}
\wrR_n : \wGr_\fin \to   \Lin_\GLV\left(\adj\Lin_{\fin_1}(\otexp V{{\hh}_1},V) 
\ot \cdots \ot \Lin_{\fin_r}(\otexp
V{{\hh}_r},V),V\right)
\end{equation}
which is, by Proposition~\ref{zabere_to?}, 
an epimorphism and is an isomorphism if $\dim(V) \geq r$.

\begin{remark}
\label{jaja}
As in Example~\ref{456}, it turns out that 
the quotient $\wGr_\fin = \fin \backslash \wGr$ is a 
{\em space of graphs\/} though, for general symmetries,
``space of graphs'' means a free wheeled operad on a certain
$\Sigma$-module~\cite{mms}. 
In the cases relevant for our paper, we however remain in the realm of
`classical' graphs, as shown in the following example, see also the
proof of Corollary~\ref{boli_mne_za_krkem}.
\end{remark}

\begin{example}
\label{ja}
Suppose that, for some $1 \leq i \leq r$, $\fin_i$ equals the
augmentation ideal $I_{{\hh}_i}$ of $\bfk[\Sigma_{{\hh}_i}]$ as in 
Example~\ref{exxx}.
Then, in the quotient $\fin \backslash \wGr$, one identifies graphs that
differ by the order of inputs of the vertex $F_i$. In other words,
modding out by $\fin_i \subset \fin$ erases the order of inputs of
$F_i$, turning $F_i$ into an ordinary (non-planar) vertex.  If $\fin_i =
\nabla$ as in Example~\ref{exxy}, one gets a
vertex of arity $v+2$, $v \geq 0$, 
whose first $v$ inputs are symmetric.
\end{example}

For applications, we 
still need one more level of generalization that will reflect the
antisymmetry of the Chevalley-Eilenberg
complex~\cite[Section~2]{markl:na} in the Lie algebra variables. As
a motivation for our construction, we offer the following continuation
of the calculations in Section~\ref{s2} and Example~\ref{456}.

\begin{example}
\label{zivotosprava}
We will consider the tensor product $V \ot V$ as a left
$\Sigma_2$-module, with the action $\tau(v' \ot v'') := -
(v'' \ot v')$, for $v',v'' \in V$ and the generator $\tau \in
\Sigma_2$. The subspace $(V \ot V)^{\Sigma_2}$ of $\Sigma_2$-fixed
elements is then precisely the second exterior power $\ext^2 V$.  This left
action induces a $\GLV$-equivariant right $\Sigma_2$-action on the
space $\Lin\left(\adj\otexp V2 \sqot \Sym(\otexp V2,V),V\right)$ such that
\[
\Lin\left(\adj\otexp V2 \ot \Sym(\otexp V2,V),V\right)/\Sigma_2 \cong 
\Lin\left(\adj\ext^2 V \ot \Sym(\otexp V2,V),V\right).
\]
The above isomorphism restricts to an isomorphism
\begin{equation}
\label{u}
\Lin_\GLV\left(\adj\otexp V2 \ot \Sym(\otexp V2,V),V\right)/\Sigma_2 \cong 
\Lin_\GLV\left(\adj\ext^2 V \ot \Sym(\otexp V2,V),V\right). 
\end{equation}
of the subspaces of $\GLV$-equivariant maps.

Likewise, $\wGr_{{\rm ex},\bullet}$ carries a right $\Sigma_2$-action that
interchanges the labels $X$ and $Y$ of the \black-vertices of graphs
in the last column of Figure~\ref{table} and multiplies the sign of
the corresponding generator by $-1$. The map~(\ref{Phillips}) is
$\Sigma_2$-equivariant, therefore it induces the map
\[
\wrR_n/\Sigma_2 : \wGr_{{\rm ex},\bullet} /\Sigma_2 \to
\Lin_\GLV\left(\adj\otexp V2 \ot \Sym(\otexp V2,V),V\right)/\Sigma_2.
\]
Let us denote $\Gr^2_{{\rm ex},\bullet} 
:= \wGr_{{\rm ex},\bullet}/\Sigma_2$ and $\rR^2_n :=
\wrR_n/\Sigma_2$. Using~(\ref{u}), one rewrites the above map as an
epimorphism
\[
\rR^2_n : \Gr^2_{{\rm ex},\bullet} 
\epi \Lin_\GLV\left(\adj\ext^2 V \sqot \Sym(\otexp V2,V),V\right)
\] 
which is an isomorphism if $n \geq 3$.

The space $\Gr^2_{{\rm ex},\bullet}$ is isomorphic to the span of the set of
directed, oriented graphs with one (non-planar) binary vertex $F$, an
anchor \anchor, and two `white' vertices \white.  By an {\em
orientation\/} we mean a linear order of white vertices. A graph with
the opposite orientation is identified with the original one taken with the
opposite sign. It is clear that, with $\Gr^2_{{\rm ex},\bullet}$ 
defined in this way, the
map $\Gr^2_{{\rm ex},\bullet} 
\to \wGr_{{\rm ex},\bullet} /\Sigma_2$ that replaces the first
(in the linear order given by the orientation) white vertex \white\ by
the black vertex \black\ labelled by $X$, and the second white vertex
by the black vertex labelled by $Y$, is an isomorphism.

The symmetry of the inputs of the vertex $F$ implies 
the following identities in $\Gr^2_{{\rm ex},\bullet}$:
\[
\unitlength.5cm
\begin{picture}(10,2)(0,.5)
\put(0,2){\makebox(0,0)[cc]{\hskip .5mm$\bbox$}}
\put(0,1.08){\vector(0,1){.85}}
\put(0,1){\makebox(0,0)[cc]{\Large$\bullet$}}
\put(0.3,1.2){\makebox(0,0)[l]{\scriptsize$F$}}
\put(-1,-1){
\put(0.15,1.15){{\vector(1,1){.76}}}
\put(0,1){\makebox(0,0)[cc]{\Large$\circ$}}
\put(1,1){\makebox(0,0)[cc]{$<$}}
}
\put(1,-1){
\put(-.15,1.15){{\vector(-1,1){.76}}}
\put(0,1){\makebox(0,0)[cc]{\Large$\circ$}}
}
\put(2.5,1){\makebox(0,0){$=-$}}
\put(5,0){
\put(0,2){\makebox(0,0)[cc]{\hskip .5mm$\bbox$}}
\put(0,1.08){\vector(0,1){.85}}
\put(0,1){\makebox(0,0)[cc]{\Large$\bullet$}}
\put(0.3,1.2){\makebox(0,0)[l]{\scriptsize$F$}}
\put(-1,-1){
\put(0.15,1.15){{\vector(1,1){.76}}}
\put(0,1){\makebox(0,0)[cc]{\Large$\circ$}}
\put(1,1){\makebox(0,0)[cc]{$>$}}
}
\put(1,-1){
\put(-.15,1.15){{\vector(-1,1){.76}}}
\put(0,1){\makebox(0,0)[cc]{\Large$\circ$}}
}
\put(2.5,1){\makebox(0,0){$=-$}}
}
\put(10,0){
\put(0,2){\makebox(0,0)[cc]{\hskip .5mm$\bbox$}}
\put(0,1.08){\vector(0,1){.85}}
\put(0,1){\makebox(0,0)[cc]{\Large$\bullet$}}
\put(0.3,1.2){\makebox(0,0)[l]{\scriptsize$F$}}
\put(-1,-1){
\put(0.15,1.15){{\vector(1,1){.76}}}
\put(0,1){\makebox(0,0)[cc]{\Large$\circ$}}
\put(1,1){\makebox(0,0)[cc]{$<$}}
}
\put(1,-1){
\put(-.15,1.15){{\vector(-1,1){.76}}}
\put(0,1){\makebox(0,0)[cc]{\Large$\circ$}}
}
}
\put(11.5,1){\makebox(0,0){,}}
\end{picture}
\adjust {1.2}
\]
from which one concludes that
\[
\hskip  5cm 
\unitlength.5cm
\begin{picture}(10,2)(0,.5)
\put(0,2){\makebox(0,0)[cc]{\hskip .5mm$\bbox$}}
\put(0,1.08){\vector(0,1){.85}}
\put(0,1){\makebox(0,0)[cc]{\Large$\bullet$}}
\put(0.3,1.2){\makebox(0,0)[l]{\scriptsize$F$}}
\put(-1,-1){
\put(0.15,1.15){{\vector(1,1){.76}}}
\put(0,1){\makebox(0,0)[cc]{\Large$\circ$}}
\put(1,1){\makebox(0,0)[cc]{$<$}}
}
\put(1,-1){
\put(-.15,1.15){{\vector(-1,1){.76}}}
\put(0,1){\makebox(0,0)[cc]{\Large$\circ$}}
}
\put(2.5,1){\makebox(0,0){$=0$.}}
\end{picture}
\adjust {1.2}
\]
Therefore $\Gr^2_{{\rm ex},\bullet}$ 
is in this case one-dimensional, spanned by the
equivalence class of the oriented directed graph
\[
\begin{picture}(7,5)(1,5)
\unitlength.74cm
\put(-1,-1.5){
\put(0,-.1){
\put(0,2){\put(0.03,0){\makebox(0,0)[cc]{$\bbox$}}}
\put(0,1.17){\vector(0,1){.75}}
\put(0,1){\makebox(0,0)[cc]{\Large$\circ$}}
}
\put(.4,.2){
\put(2.09,.85){\makebox(0,0)[cc]{\oval(1.5,1.5)[b]}}
\put(2.09,1.15){\makebox(0,0)[cc]{\oval(1.5,1.5)[t]}}
\put(2.85,1.22){\line(0,1){.3}}
\put(.82,.82){\vector(1,1){.48}}
\put(.7,.7){\makebox(0,0)[cc]{\Large$\circ$}}
\put(1.35,1.35){\makebox(0,0)[cc]{\Large$\bullet$}}
\put(1.32,1.25){\makebox(0,0)[tc]{\vector(0,1){0}}}
\put(1,1.45){\makebox(0,0)[r]{\scriptsize $F$}}
\put(0.16,0.7){\makebox(0,0)[cc]{$<$}}
\put(3.4,1){\makebox(0,0)[b]{.}}
}}
\end{picture}
\adjust {2}
\] 
In the notation of Figure~\ref{table}, the above graph represents the
map that sends $(X\land Y) \ot F \in \ext^2 V \ot \Sym(\otexp V2,V)$ into
\[
X \otimes \Tr(F(Y,-)) - Y \otimes \Tr(F(X,-)) \in V.
\]
\end{example}

Let us turn to our final task. We want to describe $\GLV$-invariant
elements in the space
\begin{equation}
\label{piano-nad-hlavou}
\Lin\left(\Ext_{1 \leq i \leq m} \Sym(\otexp V{{\hh}_i},V) \ot
\bigotimes_{m+1 \leq i \leq r} \Lin_{\fin_i}(\otexp V{{\hh}_i},V),V\right)
\end{equation}
where, as before, $r,\Rada {\hh}1r$ are positive integers, $\fin_i
\subset \bfk[\Sigma_{{\hh}_i}]$ for $m+ 1 \leq i \leq r$, and $m$ is
an integer such that $1 \leq m \leq r$. Having in mind the description
of the space of symmetric multilinear maps given in
Example~\ref{exxx}, we extend the definition of $\fin_i$ also to 
$1 \leq i \leq m$,  by putting $\fin_i: = I_{\hh_i}$.  
The first step is to identify
the exterior power $\Land_{1 \leq i \leq m} \Sym(\otexp V{{\hh}_i},V)$
with the fixed point set of an action of a suitable finite group. This
can be done as follows.

For $1 \leq w \leq m$, let $A(w) \subset \{\rada 1m\}$ be the subset
$A(w) := \{1 \leq i \leq m;\ {\hh}_i = {\hh}_w\}$. Then
\[
\{\rada 1m\} = \textstyle\bigcup_{1 \leq w \leq m} A(w)
\] 
is a decomposition of $\{\rada 1m\}$ into not necessarily distinct
subsets.  Let $\fA \subset \Sigma_m$ be the subgroup of permutations
of $\{\rada 1m\}$ preserving this decomposition.

The group $\fA$ acts on $\bigotimes _{1 \leq i \leq m}
\Sym(\otexp V{{\hh}_i},V)$ by permuting the corresponding
factors. If we consider this tensor product as a left $\fA$-module
with this permutation action twisted by the signum representation, then
\[
\Ext_{1 \leq i \leq m} \Sym(\otexp V{{\hh}_i},V) \cong
\left(\bigotimes_{1 \leq i \leq m} \Sym(\otexp V{{\hh}_i},V)
\right)^\fA.
\]
The above left $\fA$-action on $\bigotimes _{1 \leq i
\leq m} \Sym(\otexp V{{\hh}_i},V)$ induces a dual $\GLV$-equivariant 
right $\fA$-action on the space~(\ref{piano-nad-hlavou}).

There is a right $\fA$-action on the quotient $\wGr_\fin = \fin
\backslash \wGr$ defined as
follows. For a graph $G \in \wGr$ representing an element $[G] \in
\wGr_\fin$ and for $\sigma \in \fA$, let
$G^\sigma$ be the graph obtained from $G$ by permuting
the vertices $\Rada F1m$ according to $\sigma$. We then put $[G]\sigma
:= {\rm sgn\/}(\sigma)  [G^\sigma]$. Since, by the
definition of $\fA$, $\sigma$ may interchange only vertices with the
same number of inputs and the same symmetry, our definition of
$G^\sigma$ makes sense. 

It is simple to see that 
the map $\wrR_n$ in~(\ref{jeste_jeden_den}) is $\fA$-equivariant, 
giving rise to the map
\[
\wrR_n / \fA : \wGr_\fin/\fA \to 
\Lin_\GLV(\Lin_{\fin_1}(\otexp V{{\hh}_1},V) \ot \cdots \ot
\Lin_{\fin_r}(\otexp V{{\hh}_r},V),V)/\fA
\]
of right cosets. 
The codomain of $\wrR_n/ \fA$ is easily seen to be isomorphic to the
subspace of $\GLV$-equivariant elements
in~(\ref{piano-nad-hlavou}). The above calculations are summarized in the
following proposition in which 
$\Gr^m_\fin := \wGr_\fin/\fA$ and $\rR^m_n:= \wrR_n / \fA$. 

\begin{proposition}
\label{zabere_to??}
Let $r,\Rada \hh 1r$ be non-negative integers, $1 \leq m \leq r$, and 
$\fin_i \subset \bfk[\Sigma_{\hh_i}]$ for $m+1 \leq i \leq r$. Then the map
\begin{equation}
\label{zitra_na_kole}
\rR^m_n :\Gr^m_\fin \to \Lin_\GLV\left(\Ext_{1 \leq i \leq m} 
\Sym(\otexp V{\hh_i},V)  \ot
\bigotimes_{m+1 \leq i \leq r} 
\Lin_{\fin_i}(\otexp V{\hh_i},V),V\right)
\end{equation}
constructed above is an epimorphism. 
If, moreover, the dimension $n$ of $V$ 
$\geq$ the number of edges of graphs spanning
$\Gr^m_\fin$, $\rR^m_n$ is also an isomorphism.
\end{proposition}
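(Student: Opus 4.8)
The plan is to derive Proposition~\ref{zabere_to??} from the map~(\ref{jeste_jeden_den}) by passing to $\fA$-coinvariants, in exactly the way Example~\ref{zivotosprava} was deduced from~(\ref{Phillips}). As a preliminary step I would specialize the construction preceding~(\ref{jeste_jeden_den}) to $\fin_i := I_{{\hh}_i}$ for $1 \leq i \leq m$, so that $\Lin_{\fin_i}(\otexp V{{\hh}_i},V) = \Sym(\otexp V{{\hh}_i},V)$ by Example~\ref{exxx}, while keeping the prescribed $\fin_i$ for $m+1 \leq i \leq r$. This yields the $\GLV$-equivariant map
\[
\wrR_n : \wGr_\fin \to \Lin_\GLV\left(\textstyle\bigotimes_{1 \leq i \leq m}\Sym(\otexp V{{\hh}_i},V) \ot \bigotimes_{m+1 \leq i \leq r}\Lin_{\fin_i}(\otexp V{{\hh}_i},V),V\right),
\]
which, by Proposition~\ref{zabere_to?}, is an epimorphism and is an isomorphism whenever $n \geq r$, the generating graphs having exactly $r$ edges.

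The second step is to verify that this $\wrR_n$ intertwines the two right $\fA$-actions: on the source $\wGr_\fin$, $\sigma \in \fA$ acts by $[G] \mapsto {\rm sgn}(\sigma)\,[G^\sigma]$, where $G^\sigma$ is obtained by permuting the vertices $\Rada F1m$; on the target, $\fA$ acts dually to the permutation of the first $m$ tensor factors twisted by the signum representation. After cancelling the common sign ${\rm sgn}(\sigma)$, equivariance reduces to the tautology that evaluating $\wrR_n(G^\sigma)$ on $F_1 \ot \cdots \ot F_r$ equals evaluating $\wrR_n(G)$ on the tuple obtained by permuting $\Rada F1m$ by $\sigma$; this is immediate from~(\ref{beru_antibiotika}), since permuting the vertices $\Rada F1m$ of $G$ and permuting the corresponding arguments have the same effect on the state sum. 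As already noted before the statement, $\fA$ permutes only vertices of equal arity and equal symmetry, so this descends to $\wGr_\fin$ and $\rR^m_n := \wrR_n/\fA$ is well defined.

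Finally I would pass to coinvariants. Since $\bfk$ has characteristic zero and $\fA$ is finite, Maschke's theorem makes the functor $(-)/\fA$ of right $\fA$-coinvariants exact; in particular it carries the epimorphism $\wrR_n$ to the epimorphism $\rR^m_n$, and since an $\fA$-equivariant isomorphism always induces an isomorphism on coinvariants, $\rR^m_n$ is an isomorphism when $n \geq r$, which is the number of edges of the graphs spanning $\Gr^m_\fin$. It remains to identify the target of $\rR^m_n$ with the space of $\GLV$-invariants in~(\ref{piano-nad-hlavou}): writing $M := \bigotimes_{1 \leq i \leq m}\Sym(\otexp V{{\hh}_i},V)$ with the sign-twisted left $\fA$-action and $W := \bigotimes_{m+1 \leq i \leq r}\Lin_{\fin_i}(\otexp V{{\hh}_i},V)$ with the trivial action, the target is $\Lin_\GLV(M \ot W,V)/\fA$, and using $\Lin(M,V)^\fA = \Lin(M/\fA,V)$ together with the averaging isomorphisms $N/\fA \cong N^\fA$ and $M/\fA \cong M^\fA$ (all $\GLV$-equivariant over a field of characteristic zero) this becomes $\Lin_\GLV(M^\fA \ot W,V)$, where $M^\fA = \Ext_{1 \leq i \leq m}\Sym(\otexp V{{\hh}_i},V)$ by the identification recalled just before the proposition. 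I expect the only real effort to be this last bookkeeping — keeping track of $\GLV$-equivariance and reconciling the two presentations of the target through the string of canonical isomorphisms; everything else is a formal consequence of Proposition~\ref{zabere_to?} and Maschke's theorem.
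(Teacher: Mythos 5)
Your proposal follows essentially the same route as the paper: the paper likewise obtains $\rR^m_n$ by specializing $\fin_i := I_{\hh_i}$ for $1 \leq i \leq m$, checking that the map~(\ref{jeste_jeden_den}) is $\fA$-equivariant, passing to right $\fA$-cosets, and identifying the codomain with the $\GLV$-invariants of~(\ref{piano-nad-hlavou}) by the standard duality/averaging argument in characteristic zero, so that the epimorphism and stable-range isomorphism statements are inherited from Proposition~\ref{zabere_to?}. Your write-up merely makes explicit the steps the paper dismisses as ``simple to see,'' and is correct.
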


The following result says that the presence of vertices with symmetric
inputs miraculously {\em extends\/} the stability range
(Definition~\ref{stab}). In applications, these vertices will
represent the Lie algebra generators in the Chevalley-Eilenberg
complex.

\begin{proposition}
\label{zitra_odletam_z_IHES_do_Prahy}
Suppose that $\Rada \hh 1m \geq 2$.
If $n \geq e-m$, where $n$ is the dimension of $V$ and $e$ 
the number of edges of graphs spanning
$\Gr^m_\fin$, then the map $\rR^m_n$ in Proposition~\ref{zabere_to??} 
is an isomorphism.
\end{proposition}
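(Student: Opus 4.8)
The plan is to reduce the statement, via Proposition~\ref{zabere_to??}, to an inclusion of kernels. Proposition~\ref{zabere_to??} already gives that $\rR^m_n$ is an epimorphism, so only injectivity for $n \ge e-m$ remains, and when $n \ge e$ there is nothing more to prove; assume therefore $e-m \le n < e$. The map $\rR^m_n$ of~(\ref{zitra_na_kole}) arises from the map $\wrR_n :\wGr \to \Lin_\GLV(\cdots)$ of~(\ref{jeste_jeden_den}) by passing to quotients: $\Gr^m_\fin = \wGr/K$, where $K \subseteq \wGr$ is the span of the $\fin$-relations together with the $\fA$-relations, while on the target one quotients by the span of the corresponding relations, which by surjectivity of $\wrR_n$ is precisely $\wrR_n(K)$. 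Hence $\rR^m_n$ is a monomorphism if and only if $\ker(\wrR_n) \subseteq K$; in fact it suffices to prove the stronger inclusion $\ker(\wrR_n) \subseteq \fin\wGr$, the span of the $\fin$-relations alone, so the $\fA$-relations play no role.

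Next I would compute $\ker(\wrR_n)$ from the commutative square~(\ref{nehoji_se_to}). There $\Psi$ and $\Phi$ are isomorphisms and $\wGr \cong \bfk[\Sigma_k]$ with $k=e$ (in the present reduced situation $e = r = {\hh}_1 + \cdots + {\hh}_r + 1$), so $\ker(\wrR_n) = \Psi^{-1}(\ker{\mathcal R}_n)$, and by the Invariant Tensor Theorem $\ker{\mathcal R}_n$ is the two-sided ideal of $\bfk[\Sigma_k]$ generated by the antisymmetrizers $a_S := \sum_{\sigma\in\Sigma_S}{\rm sgn\/}(\sigma)\,\sigma$ over subsets $S \subseteq \{\rada 1k\}$ with $\card(S) > n$. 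Under $\Psi$ the set $\{\rada 1k\}$ is identified with the linearly ordered set of inputs of the generating graphs, which decomposes into the blocks $\IN(F_1),\dots,\IN(F_r)$ together with the single input of the anchor; as in Example~\ref{456}, $\Psi$ intertwines the left $\fin$-action on $\wGr$ with multiplication by $\fin$ viewed inside the group algebra of the Young subgroup $\Sigma_{{\hh}_1}\times\cdots\times\Sigma_{{\hh}_r} \hookrightarrow \Sigma_k$, so that $\Psi(\fin\wGr)$ is the one-sided ideal generated by $\fin$. It is therefore enough to show that every $a_S$ with $\card(S) > n$ lies in the ideal generated by $\fin$; and since $\tau a_S = a_S\tau = -a_S$ for every transposition $\tau \in \Sigma_S$, it will not matter on which side.

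The combinatorial heart is a pigeonhole count. Let $S \subseteq \{\rada 1k\}$ with $\card(S) > n \ge e-m$; since $\card(\{\rada 1k\}) = e$, the complement has $\card(\{\rada 1k\}\setminus S) \le m-1$. Were $S$ to meet each of the blocks $\IN(F_1),\dots,\IN(F_m)$ of the symmetric vertices in at most one element, then $\{\rada 1k\}\setminus S$ would contain at least ${\hh}_i - 1 \ge 1$ inputs of each $F_i$, $1 \le i \le m$, and since these blocks are pairwise disjoint this would force $\card(\{\rada 1k\}\setminus S) \ge \sum_{i=1}^{m}({\hh}_i-1) \ge m$, a contradiction. (This is the only place the hypothesis $\Rada\hh 1m \ge 2$ is used.) So $S$ contains two inputs $\alpha, \beta$ of one and the same symmetric vertex $F_i$, $1 \le i \le m$. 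Then $\tau := (\alpha\,\beta) \in \Sigma_S$, so
\[
a_S \;=\; \tfrac12\,(e-\tau)\,a_S \;\in\; \fin_i\cdot\bfk[\Sigma_k],
\]
because $e-\tau$ lies in the augmentation ideal $I_{{\hh}_i} = \fin_i$ and $2$ is invertible in $\bfk$. As the generating set $\{a_S\}$ of $\ker{\mathcal R}_n$ is closed under conjugation, this yields $\ker{\mathcal R}_n \subseteq \fin\cdot\bfk[\Sigma_k]$, hence $\ker(\wrR_n) = \Psi^{-1}(\ker{\mathcal R}_n) \subseteq \fin\wGr \subseteq K$, so $\rR^m_n$ is a monomorphism; with Proposition~\ref{zabere_to??} it is an isomorphism.

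I expect the main obstacle to be not the combinatorics, which is the one-line count above, but the faithful translation in the middle step: one must make sure that the kernel computed abstractly in $\bfk[\Sigma_k]$ via the Invariant Tensor Theorem is matched through $\Psi$ with the subspace $\fin\wGr$ of graph relations -- in particular that the identification of $\{\rada 1k\}$ with the ordered set of inputs carries the Young subgroup $\Sigma_{{\hh}_1}\times\cdots\times\Sigma_{{\hh}_r}$ to the expected place -- and to keep the left/right module conventions consistent, which because of the two-sided absorption $\tau a_S = a_S\tau = -a_S$ ultimately causes no trouble but does need to be checked.
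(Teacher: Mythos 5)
Your proof is correct and follows essentially the same route as the paper's: identify the generators of the kernel as the antisymmetrizations over subsets $S$ of edges with $\card(S)>n$, use the pigeonhole count forced by $\Rada \hh 1m\geq 2$ to find two inputs of a single symmetric vertex inside $S$, and conclude that the generator dies in the quotient. The only difference is presentational --- you factor $a_S$ through the augmentation ideal in $\bfk[\Sigma_k]$, while the paper cancels the signed sum of reglued graphs in pairs --- and your more explicit reduction of injectivity on quotients to $\ker(\wrR_n)\subseteq K$ is a welcome tightening of a step the paper leaves implicit.
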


\begin{proof}

Let $G$ be a 
graph spanning $\Gr^m_\fin$ and $S \subset \Edg(G)$ a subset of edges of $G$ 
such that $\card(S) > n$. For each permutation $\sigma$ of elements
of $S$, denote by $G_\sigma$ the graph obtained by cutting the edges
belonging to $S$ in the middle and regluing them following the
automorphism $\sigma$. The linear combination
\begin{equation}
\label{eeee}
\sum_{\sigma \in \Sigma_S}{\rm sgn\/}(\sigma) \cdot  G_\sigma \in
\Gr^m_\fin
\end{equation}
is then a graph-ical representation of the expression
in~(\ref{Pozitri_zpet_do_Prahy}), thus the kernel of $\rR^m_n$ is
generated by expressions of this type.  Since, by assumption,
$\card(S) \leq n+m$ and $\Rada \hh 1m \geq 2$, the set $S$ must
necessarily contain two input edges of the {\em same\/} symmetric
vertex of $G$.  This implies that the sum~(\ref{eeee}) vanishes,
because with each graph $G_\sigma$ it contains the same graph with the
opposite sign.  This shows that the kernel of $\rR^m_n$ is trivial.
\end{proof}

\begin{remark}
\label{po_navratu_z_polska}
By an absolutely straightforward generalization of the above
constructions, one can obtain versions of
Proposition~\ref{zabere_to??} and
Proposition~\ref{zitra_odletam_z_IHES_do_Prahy} describing the space
\begin{equation}
\label{Eli_chce_prijet_v_prosinci.}
\Lin_\GLV\left(\Ext_{1 \leq i \leq m} 
\Sym(\otexp V{\hh_i},V)  \ot
\bigotimes_{m+1 \leq i \leq r} 
\Lin_{\fin_i}^{\So_i}(\otexp V{\hh_i},\otexp V{p_i}), 
\Lin_{\fin}^{\So}(\otexp V{c},\otexp V{d})\right)
\end{equation}
in terms of a space spanned by graphs. Since the notational aspects of
such a generalization are horrendous, we must leave the details as an
exercise to the reader.
\end{remark}

\section{A particular case}
\label{s5}

We finish this note by a corollary tailored for the needs of~\cite{markl:na}.
For non-negative integers $m,b$ and $c$, denote by
$\Gr^m_{\bullet(b)\nabla(c)}$ the space spanned by directed, oriented
graphs with
\begin{itemize} 
\item[(i)]
$m$ unlabeled `white' vertices with fully symmetric inputs and 
arities $\geq 2$,
\item[(ii)] 
$b$ `black' labelled vertices with fully
symmetric inputs and arities $\geq 0$,
\item[(iii)]
$c$ labelled $\nabla$-vertices, and 
\item[(iv)]
the anchor \anchor.
\end{itemize}

In item~(iii), a $\nabla$-vertex means a vertex with the symmetry
described in Example~\ref{exxy}, see also Example~\ref{ja}. 
As in Example~\ref{zivotosprava}, an {\em orientation\/}
is given by 
a linear order on the set of white vertices. If $G'$ and $G''$ are graphs
in $\Gr^m_{\bullet(b)\nabla(c)}$ whose orientations differ by an odd number of
transpositions, then we identify $G' = -G''$ in
$\Gr^m_{\bullet(b)\nabla(c)}$.

\begin{corollary}
\label{boli_mne_za_krkem}
For each non-negative integers $m,b$ and $c$ there exists a natural
epimorphism
\begin{eqnarray*}
\lefteqn{
\rR^m_{\bullet(b)\nabla(c),n} :\Gr^m_{\bullet(b)\nabla(c)} \epi}
\\ 
&& \bigoplus_{\vec h \in \frH}
\Lin_\GLV \hskip -.2em
\left(\Ext_{1 \leq i \leq m} \hskip -.2em
\Sym(\otexp V{{\hh}_i},V)  \ot
\bigotimes_{m+1 \leq i \leq m+b} \hskip -1.2em  \Sym(\otexp V{{\hh}_i},V) 
\bigotimes_{m+b+1 \leq i \leq m+b+c} \hskip -1.2em 
\Lin_\Delta(\otexp V{{\hh}_i},V),V\right),
\end{eqnarray*}
with the direct sum taken over the set $\frH$ of all multiindices
$\vec h = (\Rada h1{m+b+c})$ such that  
\[
\Rada h1m \geq 2,\ 
\Rada h{m+1}{m+b} \geq 0\  \mbox { and }\  \Rada h{m+b+1}{m+b+c} \geq 2.
\]  
The map $\rR^m_{\bullet(b)\nabla(c),n}$ is an isomorphism if $n = \dim(V) \geq
b+c$.
\end{corollary}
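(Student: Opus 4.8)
The plan is to read Corollary~\ref{boli_mne_za_krkem} as the arity-by-arity assembly of Proposition~\ref{zabere_to??} and Proposition~\ref{zitra_odletam_z_IHES_do_Prahy}. Put $r := m+b+c$. For a multiindex $\vec h = (\Rada h1r) \in \frH$ I would take the symmetries $\fin_i \subset \bfk[\Sigma_{h_i}]$ entering Proposition~\ref{zabere_to??} to be $\fin_i := I_{h_i}$, the augmentation ideal, for $1 \leq i \leq m+b$, so that $\Lin_{\fin_i}(\otexp V{h_i},V) = \Sym(\otexp V{h_i},V)$ by Example~\ref{exxx}; and $\fin_i := \nabla$ as in Example~\ref{exxy} with $v = h_i-2$ for $m+b+1 \leq i \leq r$, so that $\Lin_{\fin_i}(\otexp V{h_i},V) = \Lin_\Delta(\otexp V{h_i},V)$ is the $\nabla$-symmetric space. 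By Example~\ref{ja}, forming $\wGr_{\fin(\vec h)}$ replaces the vertices $\Rada F1{m+b}$ by ordinary symmetric vertices and the vertices $F_{m+b+1},\dots,F_r$ by $\nabla$-vertices, and the further quotient by the sign-twisted $\fA$-action un-labels the first $m$ of them up to orientation; thus $\Gr^m_{\fin(\vec h)}$ is precisely the subspace of $\Gr^m_{\bullet(b)\nabla(c)}$ spanned by graphs whose vertices $\Rada F1r$ have arities $\Rada h1r$. Running $\vec h$ over $\frH$ therefore decomposes $\Gr^m_{\bullet(b)\nabla(c)}$ as the direct sum of the $\Gr^m_{\fin(\vec h)}$, under the same bookkeeping the codomain of $\rR^m_{\bullet(b)\nabla(c),n}$ is the direct sum of the codomains of the maps $\rR^m_n$ of Proposition~\ref{zabere_to??}, and $\rR^m_{\bullet(b)\nabla(c),n}$ is $\bigoplus_{\vec h}\rR^m_n$.

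Granting this identification, the epimorphism assertion is Proposition~\ref{zabere_to??} applied to each summand. For the isomorphism range I would count edges: in the reduced situation of Section~\ref{s3} every $F_i$ carries a single outgoing edge and the anchor a single incoming one, so a nonzero graph exists only when the output- and input-slots match, i.e.\ $r = h_1 + \cdots + h_r + 1$, and then the number of edges is exactly $r = m+b+c$, independently of $\vec h$; the $\vec h\in\frH$ violating this equality contribute zero on both sides. Hence on every nonzero summand the number $e$ of edges equals $m+b+c$, so the hypothesis $n \geq b+c$ is precisely $n \geq e-m$, while the remaining hypothesis $\Rada h1m \geq 2$ of Proposition~\ref{zitra_odletam_z_IHES_do_Prahy} is built into the definition of $\frH$ (and vacuous when $m=0$, in which case one uses instead the isomorphism range $\dim V \geq r$ of the map~(\ref{jeste_jeden_den})). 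Therefore each $\rR^m_n$, and with it $\rR^m_{\bullet(b)\nabla(c),n} = \bigoplus_{\vec h}\rR^m_n$, is an isomorphism as soon as $n \geq b+c$. Naturality in $V$ is inherited, as noted after~(\ref{beru_antibiotika}), from the basis-independence of $\wrR_n$ and hence of all maps induced from it.

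The one step that genuinely needs care — and which is exactly what Remark~\ref{jaja} has in mind when it refers to ``the proof of Corollary~\ref{boli_mne_za_krkem}'' — is the identification used in the first paragraph: that the space of \emph{oriented} graphs with $m$ unlabelled fully-symmetric white vertices, $b$ labelled fully-symmetric black vertices and $c$ labelled $\nabla$-vertices is the direct sum, over arity assignments $\vec h \in \frH$, of the quotients $\wGr_{\fin(\vec h)}/\fA$ produced by Examples~\ref{exxx}--\ref{ja} and Proposition~\ref{zabere_to??}. Here one must match the orientation convention (a linear order on the white vertices, odd permutations acting by $-1$) against the sign-twisted $\fA$-action, reading $\frH$ modulo permutation of its first $m$ entries so that reorderings of white vertices of distinct arities are counted exactly once. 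Once this is settled — it is entirely parallel to Examples~\ref{zivotosprava} and~\ref{ja} — what remains is the routine assembly sketched above, and the main obstacle is purely notational, just as Remark~\ref{po_navratu_z_polska} warns.
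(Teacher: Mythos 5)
Your proposal is correct and follows essentially the same route as the paper's own proof: the same choice of symmetries $\fin_i(\vec h)$, the same identification of $\Gr^m_{\bullet(b)\nabla(c)}$ with $\bigoplus_{\vec h}\fin(\vec h)\backslash\wGr/\fA$ via Example~\ref{ja} and the orientation-to-labelling map, the same edge count $e=m+b+c$, and the same appeal to Propositions~\ref{zabere_to??} and~\ref{zitra_odletam_z_IHES_do_Prahy} for the range $n\geq b+c$. Your explicit remark that the summands indexed by $\vec h$ must be matched against orientations so that reorderings of white vertices of distinct arities are not double-counted addresses a bookkeeping point the paper passes over silently, but it does not change the argument.
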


\begin{proof}
The map $\rR^m_{\bullet(b)\nabla(c),n}$ is constructed by assembling
the maps $\rR^m_n$ from Proposition~\ref{zabere_to??} as follows. For
a multiindex $\vec h = (\Rada h1{m+b+c}) \in \frH$ as in the corollary 
take, in  Proposition~\ref{zabere_to??},  $r:= m+b+c$ and
\[
\fin_i = \fin_i(\vec h) := 
\cases{\adj I_{{\hh}_i}}{for $m+1 \leq i \leq m+b$ and}%
{\rule{0pt}{1.2em}\nabla}{for $m+b+1 \leq i \leq r$,}
\] 
see Examples~\ref{exxx}
and~\ref{exxy} for the notation. Let $\rR^m_n(\vec h)$ be the
map~(\ref{zitra_na_kole}) corresponding to the above choices and
$\rR^m_{\bullet(b)\nabla(c),n} := 
\bigoplus_{\vec h \in \frH}\rR^m_n(\vec h)$. 
We only need to show that the graph
space $\Gr^m_{\bullet(b),\nabla(c)}$ is isomorphic to the direct
sum of the double quotients
$\Gr^m_{\fin(\vec h)} = \fin(\vec h) \backslash \wGr /\fA$.

As we argued in Example~\ref{ja}, the left quotient 
$\wGr_{\fin(\vec h)}  = {\fin(\vec h)}  \backslash \wGr$ 
is spanned by directed graphs with $r$ labelled
vertices $\Rada F1r$ such that the 1st type vertices $\Rada F1m$
(`white' vertices) have
fully symmetric inputs and arities ${\hh}_1,\ldots, {\hh}_m$, and the remaining
vertices $\Rada F{m+1}r$ are as in items (ii)--(iv) of the definition of
$\Gr^m_{\bullet(b)\nabla(c)}$ but with fixed
arities $\rada {h_{m+1}}{h_r}$.

Modding out $\wGr_{\fin(\vec h)}$  by $\fA$ identifies graphs 
that differ by a relabelling of white vertices of
the same arity and the sign given by to the signum of this relabelling.
This clearly means that the map 
\[
\Gr^m_{\bullet(b),\nabla(c)} \to 
\bigoplus_{\vec h \in \frH} \Gr^m_{\fin(\vec h)} = \bigoplus_{\vec h \in \frH}
\wGr_{\fin(\vec h)} / \fA
\] 
that assigns to the first
(in the linear order given by the orientation) white vertex of graphs
generating $\Gr^m_{\bullet(b),\nabla(c)}$
label $F_1$, to the second white vertex label $F_2$, etc., 
is an isomorphism. By simple combinatorics, graphs spanning
$\Gr^m_{\bullet(b),\nabla(c)}$ have precisely $m+b+c$ edges which 
completes the proof of the corollary.
\end{proof}

\begin{remark}
\label{ZASE_mne_boli_v_krku}
Proposition~\ref{zabere_to??} and its Corollary~\ref{boli_mne_za_krkem} was
obtained by applying the double-coset reduction $\fin \backslash \ {-}
/\fA$ and standard duality to the map $\wrR_n$ of
Proposition~\ref{zabere_to?}. 
Backtracking all the constructions involved, one can see that, in
Corollary~\ref{boli_mne_za_krkem}, the invariant
linear map $\rR^m_{\bullet(b)\nabla(c),n}(G)$ corresponding to a graph $G \in
\Gr^m_{\bullet(b)\nabla(c)}$ is given by 
the `state sum'~(\ref{beru_antibiotika}) {\em antisymmetrized\/} 
in the white vertices.
\end{remark}

%\bibliography{b}

\def\cprime{$'$}

\end{document}